\DeclareSymbolFont{cyrletters}{OT2}{wncyr}{m}{n}
\DeclareMathSymbol{\Sha}{\mathalpha}{cyrletters}{"58}
\newcommand{\segment}[2]{\subsection{#2}\label{#1}}
\newcommand{\ssegment}[2]{\subsubsection{#2}\label{#1}}
\theoremstyle{definition}
\newtheorem*{prop*}{Proposition}
\theoremstyle{definition}
\newtheorem*{thm*}{Theorem}
\theoremstyle{definition}
\newtheorem*{cor*}{Corollary}
\theoremstyle{definition} 
\newtheorem{ssprop}[subsubsection]{Proposition}
\theoremstyle{definition} 
\newtheorem{sprop}[subsection]{Proposition}
\newcommand{\sProposition}[2]{\begin{sprop} \label{#1} 
%\marginpar{#1} 
{#2} \end{sprop}}
\theoremstyle{definition} 
\newtheorem{sconj}[subsection]{Conjecture}
\theoremstyle{definition} 
\newtheorem{sthm}[subsection]{Theorem}
\theoremstyle{definition} 
\newtheorem{ssthm}[subsubsection]{Theorem}
\theoremstyle{definition} 
\newtheorem{sslm}[subsubsection]{Lemma}
\theoremstyle{definition} 
\newtheorem{slm}[subsection]{Lemma}
\theoremstyle{definition} 
\newtheorem{scor}[subsection]{Corollary}
\newcommand{\sCorollary}[2]{\begin{scor} \label{#1} 
%\marginpar{#1} 
{#2} \end{scor}}
\theoremstyle{definition} 
\newtheorem{ssconj}[subsubsection]{Conjecture}
\theoremstyle{definition}
\newtheorem*{conj*}{Conjecture}
\theoremstyle{definition} 
\newtheorem{sscond}[subsubsection]{Condition}
\theoremstyle{definition} 
\newtheorem{ssrmk}[subsubsection]{Remark}
\let\oldmarginpar\marginpar
\renewcommand\marginpar[1]{\-\oldmarginpar[\raggedleft\footnotesize #1]%
{\raggedright\footnotesize #1}}
\newcommand{\set}[2]{\big\{ #1 \; \big| \; #2 \big\} }
\newcommand{\Vect}{\operatorname{\bf{Vect}}}
\newcommand{\from}{\leftarrow}
\newcommand{\xto}{\xrightarrow}
\newcommand{\surj}{\twoheadrightarrow}
\newcommand{\Spec}{\operatorname{Spec}}
\newcommand{\Li}{\operatorname{Li}}
\newcommand{\Hom}{\operatorname{Hom}}
\newcommand{\Ext}{\operatorname{Ext}}
\newcommand{\Isom}{\operatorname{Isom}}
\newcommand{\Aut}{\mathrm{Aut}\,}
\newcommand{\Lie}{\operatorname{Lie}}
\newcommand{\m}[1]{\mathrm{#1}}
\newcommand{\fk}[1]{\mathfrak{#1}}
\newcommand{\bb}[1]{\mathbb{#1}}
\newcommand{\la}{\lambda}
\newcommand{\ka}{\kappa}
\newcommand{\si}{\sigma}
\newcommand{\Si}{\Sigma}
\newcommand{\ze}{\zeta}
\newcommand{\ga}{\gamma}
\newcommand{\be}{\beta}
\newcommand{\om}{\omega}
\newcommand{\ep}{\epsilon}
\newcommand{\de}{\delta}
\newcommand{\Gm}{{\mathbb{G}_m}}
\newcommand{\Qp}{{\QQ_p}}
\newcommand{\Zp}{{\ZZ_p}}
\newcommand{\Cc}{\mathcal{C}}
\newcommand{\ZZ}{\bb{Z}}
\newcommand{\nN}{\fk{n}}
\newcommand{\NN}{\bb{N}}
\newcommand{\QQ}{\bb{Q}}
\newcommand{\RR}{\bb{R}}
\newcommand{\PP}{\bb{P}}
\newcommand{\Uu}{\mathcal{U}}
\newcommand{\uU}{\mathfrak{u}}
\newcommand{\Bb}{\mathcal{B}}
\renewcommand{\AA}{\bb{A}}
\newcommand{\Oo}{\mathcal{O}}
\newcommand{\Aa}{\mathcal{A}}
\newcommand{\Ee}{\mathcal{E}}
\newcommand{\Pp}{\mathcal{P}}
\newcommand{\areq}{\ar@{=}}
\newcommand{\suphook}{\ar@{^(->}}
\newcommand{\subhook}{\ar@{_(->}}
\newcommand{\smses}[6]
{
\[
\xymatrix{
1 \ar[r] &
#1 \ar[r]_-{#2} &
#3 \ar[r]_-{#4} &
#5 \ar[r] \ar@/_1.5pc/[l]_-{#6} &
1
}
\]
}
\newcommand{\inj}{\hookrightarrow}
\newcommand{\thrpl}{\PP^1 \setminus \{0,1,\infty\}}
\newcommand{\dR}{{\rm {dR}}}
\newcommand{\reg}{\operatorname{reg}}
\newcommand{\ab}{\mathrm{ab}}
\newcommand{\PL}{\m{PL}}
\newcommand{\ev}{\fk{ev}}
\newcommand{\per}{\operatorname{per}}
\newcommand{\MT}{\operatorname{\mathbf{MT}}}
\newcommand{\logu}{\log^\uU}
\newcommand{\Liu}{\Li^\uU}
\newcommand{\zeu}{\ze^\uU}
\newcommand{\un}{\m{un}}
\newcommand{\wt}{\operatorname{wt}}
\title
[The polylog quotient and the Goncharov quotient II]
{The polylog quotient and the Goncharov quotient in computational Chabauty-Kim theory II}
\author{Ishai Dan-Cohen and David Corwin}
\thanks{
D.C. was supported by through NSF RTG grant 1646385, through NSF grants DMS-1069236 and DMS-1601946 (to Bjorn Poonen), and
  by Simons Foundation grant \#402472 (to Bjorn Poonen).
I.D. was supported by ISF grant 87590021. Thanks are due to the Center for Advanced Studies in Mathematics for its support during the first author's visits to Ben Gurion University}
\date{\today}
\begin{document}

\maketitle

\raggedbottom
\SelectTips{cm}{11}

\begin{abstract} 
This is the second installment in a multi-part series starting with \cite{CorwinDCI}. Building on Dan-Cohen--Wewers \cite{mtmue},  Dan-Cohen \cite{mtmueii}, and  Brown \cite{BrownUnit}, we push the computational boundary of our explicit motivic version of Kim's method in the case of the thrice punctured line over an open subscheme of $\Spec \ZZ$. To do so, we develop a refined version of the algorithm of \cite{mtmue} tailored specifically to this case. We also commit ourselves fully to working with the polylogarithmic quotient. This allows us to restrict our calculus with motivic iterated integrals to the so-called depth-one part of the mixed Tate Galois group studied extensively by Goncharov. An application was given in \cite{CorwinDCI} where we verified Kim's conjecture in an interesting new case. 
\end{abstract}

%%%%%%%%%%%%%%%%%%%
\section{Introduction}%%%%%%%%%
%%%%%%%%%%%%%%%%%%%%%%

%\segment{}{}%%%%%
Work by Dan-Cohen--Wewers \cite{CKtwo, mtmue} and by Dan-Cohen \cite{mtmueii} produced an algorithm, based on the Chabauty--Kim method, for computing the integral points of $\thrpl$ over open integer schemes whose halting was, and remains, conditional on deep conjectures by Kim and by Goncharov, among others.\footnote{
Practical (and unconditional) methods for solving the $S$-unit equation predate this work, and can be found, for instance, in de Weger \cite{deWeger} who uses the theory of logarithmic forms of Baker--W\"ustholz \cite{BakerWuestholz} (see also Everste--Gy\H{o}ry \cite{EversteGyory} for a general discussion). A more recent approach, due to K\"anel--Matchke \cite{KaenelMatschke} is based on the Shimura-Taniyama conjecture. Our primary purpose here is not to compete with these other methods, but rather, to develop Kim's theory in a special case, to explore its interaction with the theory of mixed Tate motives and motivic iterated integrals, and to provide new numerical evidence for Kim's conjecture.
} This algorithm restricted attention to the polylogarithmic quotient of the unipotent fundamental group $\pi_1^\un(X)$, but minimized its reliance on the polylogarithmic quotient with a view towards eventually constructing an algorithm for the full unipotent fundamental group. The resulting algorithm (to quote an anonymous referee) provides proof of concept. In terms of practical applications, however, it is quite unwieldy. 

Francis Brown \cite{BrownUnit} introduced various new techniques which he was able to use to construct functions for open subschemes of $\Spec \ZZ$ in several examples. His techniques capitalize on the relative simplicity of $\Spec \ZZ$ and of the polylogarithmic quotient, and inspired us to attempt to construct a simpler algorithm, and to continue to push the computational boundary, for open subschemes of $\Spec \ZZ$. Brown also suggested (in private communication) that we might be able to replace the full mixed Tate Galois group with a quotient tailored specifically to the polylogarithmic quotient of the unipotent fundamental group.

In part one of this series \cite{CorwinDCI} (currently expected to have three parts), we worked out an example which goes beyond the examples of \cite{CKtwo, mtmue, mtmueii}. This example displayed several new and interesting phenomena. It also helped us refine our methods, and led the way to the work presented here. 

In this second installment, we present an algorithm based on the example considered in part I on the one hand, and based on Goncharov's investigations of the \emph{depth-1} quotient \cite{GonMot} on the other.\footnote{
The remainder of the introduction presupposes some familiarity with the surrounding literature (on mixed Tate motives, the unipotent fundamental group, motivic iterated integrals, $p$-adic polylogarithms, Kim's method, and Kim's conjecture) as well as passing familiarity with the previously constructed algorithm of \cite{mtmue, mtmueii}, and focuses on comparing old with new. The reader who is unfamiliar with this material may wish to skip forward to section 2 (where some of this background material is reviewed), or to go back to \cite{mtmue} for a general introduction.
} Let $K$ be a number field and let $Z$ be either $\Spec K$ (a ``number scheme'') or open in $ \Spec \Oo_K$  (an ``open integer scheme''). Associated to the \emph{Goncharov quotient}, as we will call it (segment \ref{table}),  is a Hopf subalgebra $A^G(Z)$ of the mixed Tate Hopf algebra $A(Z)$ of framed mixed Tate motives over $Z$. This subalgebra already contains the extension spaces
\[
K^{(n)}_{2n-1}(Z) = \Ext^1_Z(\QQ(0), \QQ(n)) \subset A_n(Z),
\]
and was studied extensively by Goncharov in connection with Zagier's conjecture relating special values of zeta functions of number fields to polylogarithms. This eventually led to his \textit{depth filtration conjecture} \cite{GonGal}; its depth-1 part says that $A^G(\Spec K)$ is generated by Goncharov's (unipotent) motivic polylogarithms $\Liu_i(x)$ (for $i \ge 1$ and $x$ a $K$-point of $\thrpl$). Although Zagier's conjecture, if stated in terms of values of zeta functions, is tautological for $\QQ$, the \textit{depth-1 conjecture} is still open even in this case. For our purposes we must refine this conjecture somewhat with respect to ramification. The resulting \textit{integral depth-1 conjecture} (\S\ref{depth}) for open subschemes of $\Spec \ZZ$ is based on a study of the \textit{half-weight-2} part from \cite{CKtwo}, and is of interest, we hope, in its own right. Indeed, we believe that an investigation of this conjecture and an attempt to generalize it to higher number fields (even in half-weight 2, where the case $Z = \Spec K$ is known and where the difference between $A(Z)$ and $A^G(Z)$ is not yet visible) may lead to near-term progress towards Goncharov's conjecture. 

Focusing attention on the \emph{Goncharov quotient} presents several simultaneous advantages. Most obviously, the dimensions of the graded pieces $A^G_n(Z)$ are far smaller than those of $A(Z)$. Correspondingly, if our integral depth-1 conjecture holds, our search through the vast collection of motivic iterated integrals for candidate basis elements may be limited to the comparatively small set of $n$-logarithms. In turn, working with $n$-logarithms allows us to avoid the complex combinatorics of the Goncharov coproduct formula \cite[Theorem 1.2]{GonGal}. Less obvious but perhaps equally important is that working with the Goncharov and polylogarithmic quotients allows us to make the geometry of the ``geometric algorithm'' (which computes the scheme theoretic image of the universal cocycle-evaluation map) fully explicit: we obtain a homomorphism of polynomial $\QQ$-algebras given by an explicit family of polynomials about which we learned from Brown. As a result, our new algorithm is far simpler and more efficient than the algorithm of \cite{mtmue}. It is therefore reasonable and worth-while to give a more explicit construction, with the promise of actual Sage code and ensuing numerical results (beyond those of \cite{CorwinDCI}) in the near future. 

While the Goncharov quotient holds much promise, it also presented us with a challenge. Unlike the full mixed Tate Galois group, the Goncharov quotient is not free, and so its coordinate ring $A^G(Z)$ is not a shuffle algebra. This threatened to send us looking through Goncharov's intricate analysis for information (actual or conjectural) about higher extension groups in the corresponding category of ``Goncharov motives''. Fortunately, as we discovered, by remembering the inclusion
\[
A^G(Z) \subset A(Z)
\]
and working sometimes in $A^G(Z)$ and sometimes in $A(Z)$, we can avoid direct confrontation with the structure of $A^G(Z)$.

A key observation for our work is that replacing the full mixed Tate Galois group of $Z$ by its Goncharov quotient does not shrink the  $p$-adic analytic loci $X(\Zp)_n \subset X(\Zp)$ that our algorithm computes. Actually, our loci are potentially \textit{larger} than those that intervene in Kim's conjecture \cite{nabsd}. One difference comes, of course, from considering only the polylogarithmic quotient of the unipotent fundamental group in place of the full unipotent fundamental group. The belief that we should nevertheless have equality 
\begin{align*}
\tag{*}
X(Z) = X(\Zp)_n
&&
\mbox{for } n \mbox{ sufficiently large}
\end{align*}
has travelled down a somewhat bumpy road. Kim showed in \cite{kimi} that the loci $X(\Zp)_n$ associated to the polylogarithmic quotient are finite for sufficiently large $n$ and in \cite{KimTangential} that the same holds for open integer schemes associated to totally real number fields. Experts expressed the hope (if only tentatively and quietly) that the polylogarithmic quotient should be \textit{big enough} for Kim's conjecture. However, upon completing our realization of the case $\ZZ[1/3]$ in \cite{CorwinDCI}, we discovered by computing numerically that $-1 \in X(\Zp)_4$ in that case, and subsequently showed more generally that for $Z = \Spec \ZZ[1/q]$ (for any prime $q \neq p$), we have
\begin{align*}
-1 \in X(\Zp)_n
&&
\mbox{for all } n.
\end{align*}
This meant that (*) was in fact false as stated, since for $q \neq 2$, $X(\ZZ[1/q]) = \emptyset$ (so, in particular, does \emph{not} contain $-1$). Nevertheless, we view this bump in the road as reflecting a particular interaction of the polylogarithmic quotient with roots of unity, or, at the very least, as a result of its symmetry-breaking nature. We may thus symmetrize our loci with respect to the $S_3$ action and, thus modified, expect property (*) to hold after all.

There is also a second, less obvious difference between Kim's loci and ours. This too intervened already in \cite{mtmueii}. While Kim's version may be defined in terms of the scheme-theoretic image of a $p$-adic realization map between nonabelian cohomology varieties over $\Qp$, our algorithm computes the scheme theoretic image of the rationally-defined universal cocycle evaluation map, and only then pulls back the result to $\Qp$. The result is again a possible enlargement of the resulting loci. However, as discussed in loc. cit., if the $p$-adic period conjecture holds, then there is no such enlargement after all. Period conjecture or no period conjecture, the final result is that our algorithm may be used to \textit{verify} Kim's conjecture, but can have little bearing on any attempt to falsify the conjecture. 

Because of its relative simplicity, we find it appropriate to describe the new algorithm in a somewhat more informal style than that of loc. cit., which we hope will make the present article more approachable. A concrete case (``$\ZZ[1/3]$ in depth $4$''), which provides new numerical evidence for Kim's conjecture, is worked out in \cite{CorwinDCI}. Let us point out, however, that \textit{that} work should not be viewed as merely executing the algorithm constructed in \textit{this} work since the history is rather reversed: as the numbering suggests, the example worked out in \cite{CorwinDCI} came first, and many of the ideas that went into constructing the present algorithm first developed in working out the example. 

\subsection*{Acknowledgements}
We would like to thank Bjorn Poonen and Herbert Gangl for their interest and encouragement. We would like to thank the referee for a careful reading and many helpful comments. 

%%%%%%%%%%%%%
\section{Conjectures and theorems}%%%%%
%%%%%%%%%%%%%

In this section, which doubles as a long introduction, we give precise statements of the conjectures on which our algorithm depends for halting. We also announce the success of our algorithm in computing the set of integral points and the conditional halting in a complementary pair of theorems (\ref{pointcountingthm}) similar to the main theorems of \cite{mtmueii}. As promised, we begin by summarizing background material; details may be found in \cite{mtmue, mtmueii, CorwinDCI} and the references given there.

\segment{mtm}{Mixed Tate motives and mixed Tate Galois group}%%%%
Fix an open subscheme $Z \subset \Spec \ZZ$ and a prime $p \in Z$. Write 
\[
S := \Spec \ZZ \setminus Z = \{q_1, \dots, q_s\}.
\]
We let $K^{(n)}_m(Z)$ denote the $n$-eigenspace for the Adams operations on the rational Quillen K-group $K_m(Z)\otimes \QQ$. Historically (due in large part to the work of Borel \cite{Borel53, Borel77}), it was known that 
\[
K^{(n)}_{2n-1}(Z)=
\left\{
\begin{matrix}
\QQ \langle \logu (q_1), \dots, \logu(q_s) \rangle
& 
n=1,
\\
\QQ \zeu(n) 
&
n \mbox{ odd } \ge 3,
\end{matrix}
\right.
\]
and that all other Adams pieces vanish. Here, $\logu(q_i)$ and $\zeu(n)$ are certain special elements. Moreover, there were certain naturally defined $p$-adic regulator maps
\[
\reg: K^{(n)}_{2n-1} \to \Qp
\]
and
\[
\reg(\logu(q_i)) = \log^p(q_i)
\]
\[
\reg(\zeu(n)) = \ze^p(n),
\]
the $p$-adic logarithm and zeta value respectively. 

The $p$-adic zeta values $\ze^p(n)$ ($n$ odd $\ge 3$) are known to be nonzero for $p$ regular. A small piece of the $p$-adic period conjecture asserts that the same holds for all primes $p$. See Examples 2.19 and Remark 2.20 of Furusho \cite{FurushoI} for a discussion of this and related conjectures.

\ssegment{mtmoverview}{}%%%%%
The theory of mixed Tate motives puts the $K$-groups above inside a big Hopf algebra. To define it, let $\MT(Z)$ denote the category of mixed Tate motives unramified over $Z$. The category $\MT(Z)$ is $\QQ$-Tannakian; its simple objects up to isomorphism are precisely the tensor powers of $\QQ(1)$. De Rham cohomology gives rise to a fiber functor 
\[
\dR^*: \MT(Z) \to \Vect \QQ.
\]
We have 
\[
\Hom(\QQ(0), \QQ(0)) = \QQ,
\]
\[
\Ext^1(\QQ(0), \QQ(n)) = K^{(n)}_{2n-1}(Z)
\]
and all other Ext groups $\Ext^i(\QQ(0), \QQ(n))$ vanish. This means that $\MT(Z)$ is a \emph{mixed Tate category}, and the results that follow are formal consequences of the axioms; see, for instance, Appendix A of Goncharov \cite{GonGal}.

There is a semi-direct product decomposition 
\[
\pi_1^\m{MT}(Z) := \Aut^\otimes (\dR^*) = \pi_1^\un(Z) \rtimes \Gm
\]
in which 
$
\pi_1^\un(Z)
$
is free prounipotent with abelianization 
\[
\tag{*}
\pi_1^\un(Z)^\ab
=
\bigoplus _{n \ge 1}
\Ext^1(\QQ(0), \QQ(n))^\lor,
\]
and $A(Z)  := \Oo (\pi_1^\un(Z))$ is a \emph{shuffle algebra}: in terms of homogeneous generators
\begin{align*}
\tau_1, \dots, \tau_s
&&
\mbox{and}
&&
\si_3, \si_5, \si_7, \dots
\end{align*}
(with $\tau_i$ in degree $-1$ and $\si_i$ in degree $-i$)
 $A(Z)$ has vector space basis
\[
\set{f_w}
{w \mbox{ a word in } 
\tau_1, \dots,\tau_s, \si_3, \si_5, \dots }
\]
called a ``shuffle basis'' on which the ``shuffle'' product is given by
\[
f_{w'} f_{w''} =
\sum_{\mbox{shuffles } w \mbox{ of }w', w''}
f_w,
\]
 and the ``deconcatenation'' coproduct is given by
\[
\Delta f_w = 
\sum_{w = w'w''}
f_{w'} \otimes f_{w''}.
\] 
Equation (*) may be written dually as a family of exact sequences 
\[
\tag{$*^\lor$}
0 \to
\Ext^1(\QQ(0), \QQ(n)) \to A_n 
\xto{\Delta'}
\bigoplus_{i+j = n \; i,j \ge 1}
A_i \otimes A_j
\]
where $\Delta'$ is the \emph{reduced coproduct}:
\[
\Delta'(a) = \Delta(a) - (1 \otimes a + a\otimes 1).
\]

\segment{unfun}{Unipotent fundamental group and motivic polylogarithms}%%%%%%%
Let $X = \PP^1_Z \setminus \{0,1,\infty\}$. We use the tangent vector $\vec{1_0}$ as base point. We define $\pi_1^\un(X)$ to be the fundamental group $\Aut^\otimes (\om_0)$ of the Tannakian category of unipotent connections on $X_\QQ$ at the fiber functor associated to $\vec{1_0}$. Similarly, we define the path torsor associated to a point $x\in X(Z)$ by
\[
\pi_1^\un(X,0,x) := \Isom^\otimes(\om_0, \om_x).
\]
According to Deligne \cite{Deligne89}, $\pi_1^\un(X,0,x)$ is canonically trivialized by a path ${_xp_0^\dR} \in \pi_1^\un(X,0,x)(\QQ)$ (see \cite[\S3.1.7]{mtmue}
 and the references given there). Moreover, $\pi_1^\un(X)$ is free prounipotent on two generators $e_0, e_1$ representing monodromy about $0$ and $1$, respectively. Consequently,
\[
A(X) := \Oo(\pi_1^\un(X))
\]
is a shuffle algebra on the shuffle basis $e_0, e_1$. Deligne--Goncharov \cite{DelGon} (with subsequent different approaches, for instance, by Levine \cite{LevineTMFG} and Dan-Cohen--Schlank \cite{RMPS}) construct a natural action of $\pi_1^\m{MT}(Z)$ on $\pi_1^\un(X,0,x)$.

Special elements $\Liu_n(x) \in A_n(Z)$ were constructed by Goncharov \cite{DelGon}; these were called \emph{motivic polylogarithms} in loc. cit. but are called \emph{unipotent motivic polylogarithms} by Francis Brown \cite{Brown} to distinguish them from his somewhat different notion. (In fact the $n$-logarithms (or \textit{polylogarithms}) $\Liu_n(x)$ form a special class among the more general \textit{multiple polylogarithms}, which, in turn, are a special class of (unipotent motivic) iterated integrals defined and studied in \cite{GonGal}). In the case of $Z \subset \Spec \ZZ$ (rather than more general integer schemes) these elements may be defined quite simply in terms of the action of $\pi_1^\m{MT}(Z)$ on $\pi_1^\un(X,0,x)$ and in terms of the special de Rham paths ${_xp_0^\dR}$, as we now recall.\footnote{
When $Z \subset \Spec \Oo_K$, $K$ a number field bigger than $\QQ$, the de Rham fiber functor is no longer $\QQ$-rational. Instead, one must work with the \textit{canonical} realization of the unipotent fundamental group, which thus loses its Tannakian interpretation in terms of connections. The ensuing construction of motivic polylogarithms, which is due to Goncharov, is more complicated. 
}

We abbreviate words in $e_0, e_1$ by writing words in $0$ and $1$, especially when these appear as subscripts. We let $o$ denote the orbit map associated to the de Rham path ${_xp_0^\dR}$ and we let $\tau$ denote the trivialization of the path torsor $\pi_1^\un(X,0,x)$ associated to the same path. Then $\Liu_n(x)$ is defined to be the composite map
\[
\tag{*}
\pi_1^\un(Z) 
\xto{o}
\pi_1^\un(X,0,x)
\underset{\tau}
{\xto{\sim}}
\pi_1^\un(X)
\xto{f_{10\cdots 0}}
\AA^1
\]
with $n-1$ zeroes below the `$f$'. (The intermediate map
\[
\ka(x):= \tau \circ o
\]
is a $\Gm$-equivariant 1-cocycle and will play an important role below.)
The unipotent logarithm is defined similarly by using the one-letter word `$0$' and satisfies
\[
 \Liu_1(x) = -\logu(1-x).
\]
The unipotent special zeta value $\zeu(n)$ is defined (using the tangential end-point $-\vec{1_1}$ in place of $x$) by
\[
\zeu(n) := \Liu_n(-\vec{1_1}).
\]
$p$-Adic integration gives rise to a ring homomorphism
\[
\per_p : A(Z) \to \Qp
\]
which extends the regulator maps and 
\[
\per_p(\Liu_n(x)) = \Li_n^{p}(x)
\]
is the $p$-adic $n$-logarithm of $x$.

\segment{new}{The polylog quotient and the Goncharov quotient}%%%%%
The natural inclusion
\[
X \inj \Gm 
\]
gives rise to a map
\[
\pi_1^\un(X) \to
 \pi_1^\un(\Gm) =
  \QQ(1).
\]
Let $N$ be the kernel. The polylog quotient is defined as
\[
\pi^\PL(X):= \pi_1^\un(X)/[N,N].
\]
Let $\nN^\PL(X) := \Lie \pi^\PL(X)$. According to Proposition 16.13 of Deligne \cite{Deligne89},
 $\nN^\PL(X)$ is a sum of Tate motives. Hence $\pi_1^\un(Z)$ acts trivially on $\pi^\PL(X)$. Consequently, the projection $\ka^\PL(x)$ of the cocycle $\ka(x)$ (associated to $x \in X(Z)$) to $\pi^\PL(X)$ is simply a $\Gm$-equivariant homomorphism
\[
\pi^\un_1(Z) \to \pi^\PL(X).
\]

\begin{sslm}%%%%%%
\label{diamond}
The functions $f_0$ and $f_1, f_{10}, f_{100}, \dots$ on $\pi_1^\un(X)$ factor through $\pi^\PL(X)$ and form a set of homogeneous coordinates on the latter.
\end{sslm}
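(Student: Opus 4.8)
\emph{Proof proposal.}
The plan is to reduce everything to the structure of the Lie algebra $\nN^\PL(X) := \Lie \pi^\PL(X)$ and to the degreewise perfect pairing between a graded connected Hopf algebra and its graded dual. Write $\nN := \Lie \pi_1^\un(X)$; by hypothesis this is the free Lie algebra on $e_0, e_1$, its universal enveloping algebra is the tensor algebra $\QQ\langle e_0, e_1\rangle$, and $A(X)$ is the graded dual, with $f_w$ the functional dual to the word $w$. The first thing I would pin down is the structure of $\nN^\PL(X)$. The open immersion $X \hookrightarrow \Gm$ fills in the puncture at $1$, so the induced map $\nN \to \Lie \pi_1^\un(\Gm)$ kills $e_1$ and sends $e_0$ to a generator; hence $\Lie N$ is the Lie ideal of $\nN$ generated by $e_1$, which, once $\nN$ is expanded in words, is precisely the span of those Lie elements in which the letter $e_1$ occurs at least once. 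By Lazard's elimination theorem this ideal is itself free, on the elements $(\ad e_0)^k(e_1)$ for $k \ge 0$, with $(\ad e_0)^k(e_1)$ homogeneous of weight $k+1$ (this is the computation of the ``polylog Lie algebra'' used in \cite{Deligne89, DelGon, BrownUnit}). Since $[N,N]$ is prounipotent with $\Lie[N,N] = [\Lie N, \Lie N]$, and $[\Lie N, \Lie N]$ is an ideal of $\nN$, we get $\nN^\PL(X) = \nN / [\Lie N, \Lie N]$, a graded Lie algebra with homogeneous basis $e_0$ together with the images of the $(\ad e_0)^k(e_1)$; in particular $\dim \nN^\PL(X)_1 = 2$ and $\dim \nN^\PL(X)_m = 1$ for $m \ge 2$.

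Next I would show that $f_0$ and $f_1, f_{10}, f_{100}, \dots$ factor through $\pi^\PL(X)$. Dualizing the quotient map $q \colon \pi_1^\un(X) \to \pi^\PL(X)$ identifies $\Oo(\pi^\PL(X))$, via $q^*$, with the annihilator $J^\perp \subset A(X)$, where $J$ is the kernel of the induced surjection $\QQ\langle e_0, e_1\rangle = U(\nN) \twoheadrightarrow U(\nN^\PL(X))$, i.e.\ the two-sided ideal generated by $[\Lie N, \Lie N]$. By the description of $\Lie N$ just given, every element of $\Lie N$ involves, when expanded in words, at least one letter $e_1$; hence every element of $[\Lie N, \Lie N]$ involves at least two of them, and therefore so does every element of $J$, because multiplying on either side by an arbitrary word cannot decrease this count. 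On the other hand, as a functional on $\QQ\langle e_0, e_1\rangle$ the element $f_0$ is supported on the single word $e_0$ and $f_{10^k}$ on the single word $e_1 e_0^k$, each of which contains at most one $e_1$; so $f_0$ and all the $f_{10^k}$ annihilate $J$ and hence lie in $\Oo(\pi^\PL(X))$. They are homogeneous, of weights $1$ and $k+1$ respectively, because $q^*$ is a morphism of graded Hopf algebras.

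It remains to check that these functions constitute a system of homogeneous coordinates, i.e.\ a homogeneous polynomial generating set for $\Oo(\pi^\PL(X))$. For algebraic independence I would invoke Radford's theorem: the shuffle algebra $A(X)$ is the polynomial algebra on the family $\{ f_\ell : \ell \text{ a Lyndon word} \}$, and with respect to the order $e_1 < e_0$ the words $e_0$ and $e_1 e_0^k$ ($k \ge 0$) are all Lyndon; being a subset of a polynomial generating set, $\{ f_0 \} \cup \{ f_{10^k} : k \ge 0 \}$ is algebraically independent, so the subalgebra $B \subseteq \Oo(\pi^\PL(X))$ it generates is polynomial with two generators in weight $1$ (namely $f_0$ and $f_1$) and one in each weight $m \ge 2$ (namely $f_{10^{m-1}}$). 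To upgrade the inclusion $B \subseteq \Oo(\pi^\PL(X))$ to an equality I would compare Hilbert series: since $\pi^\PL(X)$ is prounipotent, $\Oo(\pi^\PL(X))$ is, as a graded algebra, a polynomial algebra with $\dim \nN^\PL(X)_m$ generators in each weight $m$, so by the first step its Hilbert series is $(1-t)^{-2} \prod_{m \ge 2} (1 - t^m)^{-1}$ --- exactly the Hilbert series of $B$. A graded inclusion of graded vector spaces with equal, degreewise finite, Hilbert series is an equality; hence $B = \Oo(\pi^\PL(X))$, which is the assertion.

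The only genuinely non-formal ingredient is the first step --- the identification of $\Lie N$ via Lazard's elimination theorem, equivalently the graded dimensions of $\nN^\PL(X)$ --- so this is where I would lean on the cited literature rather than argue from scratch. The rest is routine once the bookkeeping for these graded pro-objects and their duals is fixed: the identifications $\Oo(\pi^\PL(X)) \cong J^\perp$ and $U(\nN / [\Lie N, \Lie N]) \cong U(\nN)/J$, the word-counting estimate placing $J$ inside the span of words with at least two occurrences of $e_1$, the Lyndon-word input via Radford, and the final Hilbert-series matching.
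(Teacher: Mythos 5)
Your proposal is correct and takes the same route as the paper: the paper's entire proof is the observation that $\nN(X)$ is free on $e_0,e_1$ with the ideal of $N$ generated by $e_1$, after which it declares the rest "purely formal." Your write-up simply executes that formal part in full (Lazard elimination, the $e_1$-counting argument showing $f_0$ and the $f_{10^k}$ annihilate the defining ideal, and the Radford/Hilbert-series comparison), all of which is sound.
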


\begin{proof}
After forgetting the $\pi_1^\un(Z)$ action, $\nN(X)$ is just the free pronilpotent Lie algebra on generators $e_0, e_1$, and the ideal associated to $N$ is generated by $e_1$. So this is purely formal.
\end{proof}

\noindent
We will denote the function $f_0$ on $\pi^\PL(X)$ by $\logu$ and the function $f_{10 \cdots 0}$ with $(n-1)$ zeroes by $\Liu_n$.

\ssegment{table}{}%%%%
Let $\nN = \nN(Z) = \Lie \pi_1^\un(Z)$. We define the \emph{Goncharov quotient} by
\[
\nN^G(Z): = \nN / [\nN_{\le -2}, \nN_{\le -2}].
\]
We also consider the associated quotient $\pi^G(Z)$ of $\pi_1^\un(Z)$ and the associated Hopf subalgebra
\[
A^G(Z) \subset A(Z).
\]

\begin{sslm}%%%%%
\label{shmats1}
Every $\Gm$-equivariant homomorphism
\[
\pi_1^\un(Z) \to \pi^\PL(X)
\]
factors through $\pi^G(Z)$.
\end{sslm}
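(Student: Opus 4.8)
The plan is to pass to Lie algebras and then exploit the fact that $\nN^\PL(X)$, in degrees $\le -2$, is contained in an abelian ideal. First I would reduce to a statement about graded Lie algebras. Since $\pi_1^\un(Z)$ and $\pi^\PL(X)$ are prounipotent $\QQ$-groups whose $\Gm$-actions induce gradings concentrated in negative degrees on their Lie algebras, a $\Gm$-equivariant homomorphism $\phi \colon \pi_1^\un(Z) \to \pi^\PL(X)$ is the same datum as a homomorphism of graded Lie algebras $\psi = \Lie \phi \colon \nN(Z) \to \nN^\PL(X)$, and $\phi$ factors through $\pi^G(Z)$ if and only if $\psi$ factors through $\nN^G(Z)$. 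Note that $\nN(Z)_{\le -2}$ is an ideal (bracketing with the degree $-1$ part only lowers degrees further, as $\nN(Z)$ is concentrated in negative degrees), and hence, by the Jacobi identity, so is $[\nN(Z)_{\le -2}, \nN(Z)_{\le -2}]$; this is what makes the Goncharov quotient Lie algebra well defined in the first place. So it suffices to show that every graded Lie algebra homomorphism $\psi \colon \nN(Z) \to \nN^\PL(X)$ annihilates $[\nN(Z)_{\le -2}, \nN(Z)_{\le -2}]$.

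Next I would record the structure of the target. Write $\nN(X) = \Lie \pi_1^\un(X)$, the free graded Lie algebra on $e_0, e_1$ in degree $-1$, and let $\mM = \Lie N$, which by the proof of Lemma \ref{diamond} is the graded ideal of $\nN(X)$ generated by $e_1$, so that $\nN^\PL(X) = \nN(X)/[\mM, \mM]$. Since $\nN(X)/\mM$ is the free Lie algebra on a single degree $-1$ generator, hence one-dimensional and concentrated in degree $-1$, we get a graded vector space decomposition $\nN(X) = \QQ e_0 \oplus \mM$, and therefore
\[
\nN^\PL(X) = \QQ e_0 \oplus \overline{\mM}, \qquad \overline{\mM} := \mM/[\mM,\mM],
\]
again as graded vector spaces, where $\overline{\mM}$, being the abelianization of $\mM$, is abelian. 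As the complementary summand $\QQ e_0$ sits in degree $-1$, we conclude
\[
\nN^\PL(X)_{\le -2} = \overline{\mM}_{\le -2} \subseteq \overline{\mM}.
\]

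Finally, the conclusion would be immediate: because $\psi$ is graded, $\psi\bigl(\nN(Z)_{\le -2}\bigr) \subseteq \nN^\PL(X)_{\le -2} \subseteq \overline{\mM}$, and $\overline{\mM}$ being abelian gives
\[
\psi\bigl([\nN(Z)_{\le -2}, \nN(Z)_{\le -2}]\bigr) = \bigl[\psi(\nN(Z)_{\le -2}),\, \psi(\nN(Z)_{\le -2})\bigr] \subseteq [\overline{\mM}, \overline{\mM}] = 0,
\]
so $\psi$ factors through $\nN^G(Z)$ and $\phi$ through $\pi^G(Z)$. The argument is essentially formal, in the spirit of Lemma \ref{diamond}; the only steps worth spelling out carefully are the translation into graded Lie algebras (including the verification that $[\nN(Z)_{\le -2},\nN(Z)_{\le -2}]$ is an ideal) and the identification $\nN^\PL(X)_{\le -2} = \overline{\mM}_{\le -2}$. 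I expect this last point — pinning down precisely which graded pieces of $\nN^\PL(X)$ lie in the abelian radical — to be the only place where anything specific about $X$, namely the explicit description of $N$ from Lemma \ref{diamond}, is actually used.
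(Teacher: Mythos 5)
Your proof is correct and follows essentially the same route as the paper: pass to graded Lie algebras, observe that a degree-zero graded homomorphism sends $\nN(Z)_{\le -2}$ into $\nN^\PL(X)_{\le -2}$, and use that the latter brackets to zero. The only difference is that you spell out the verification of $[\nN^\PL(X)_{\le -2},\nN^\PL(X)_{\le -2}]=0$ via the decomposition $\nN^\PL(X)=\QQ e_0\oplus \mM/[\mM,\mM]$, which the paper simply asserts.
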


\begin{proof}
Any degree zero graded homomorphism 
\[
\nN(Z) \to \nN^\PL(X)
\]
must send $\nN(Z)_{\le -2}$ to $\nN^\PL(X)_{\le -2}$. But
\[
[\nN^\PL(X)_{\le -2},\nN^\PL(X)_{\le -2}] =0.
\qedhere
\]
\end{proof}

\begin{ssprop}%%%%%%%%%%
\label{shmats2}
The unipotent motivic polylogarithmic values $\Liu_i(x)$ ($x\in X(Z)$) belong to $A^G(Z)$. 
\end{ssprop}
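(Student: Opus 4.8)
The plan is to unwind the definition of $\Liu_i(x)$ from segment \ref{unfun} and pass it through Lemmas \ref{diamond} and \ref{shmats1}. Recall that $\Liu_i(x)$ is the composite
\[
\pi_1^\un(Z)\xto{o}\pi_1^\un(X,0,x)\underset{\tau}{\xto{\sim}}\pi_1^\un(X)\xto{f_{10\cdots 0}}\AA^1,
\]
in which the first two arrows compose to the cocycle $\ka(x)=\tau\circ o$; thus $\Liu_i(x)=f_{10\cdots 0}\circ\ka(x)$ as a regular function on $\pi_1^\un(Z)$, i.e.\ as an element of $A_i(Z)$. Note that $\ka(x)$ is a morphism of schemes but not a homomorphism, so the argument cannot be run directly at this level; the point is to descend to the polylog quotient first.

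By Lemma \ref{diamond} the function $f_{10\cdots 0}$ on $\pi_1^\un(X)$ is pulled back along the projection $\pi_1^\un(X)\surj\pi^\PL(X)$ from the coordinate we have named $\Liu_i$ on $\pi^\PL(X)$. Composing with $\ka(x)$, and recalling that $\ka^\PL(x)$ is by definition the composite of $\ka(x)$ with this projection, we obtain the factorization
\[
\Liu_i(x)=\Liu_i\circ\ka^\PL(x)\colon\ \pi_1^\un(Z)\xto{\ka^\PL(x)}\pi^\PL(X)\xto{\Liu_i}\AA^1.
\]
Since $\pi_1^\un(Z)$ acts trivially on $\pi^\PL(X)$ (segment \ref{new}), the map $\ka^\PL(x)$ is a $\Gm$-equivariant homomorphism, so Lemma \ref{shmats1} applies: $\ka^\PL(x)$ factors as $\pi_1^\un(Z)\surj\pi^G(Z)\to\pi^\PL(X)$. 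Plugging this into the previous display exhibits $\Liu_i(x)$ as the pullback along $\pi_1^\un(Z)\surj\pi^G(Z)$ of a regular function on $\pi^G(Z)$. Since $A^G(Z)\subset A(Z)$ is precisely the subalgebra of functions so pulled back (that is, $A^G(Z)=\Oo(\pi^G(Z))$ viewed inside $\Oo(\pi_1^\un(Z))=A(Z)$), we conclude $\Liu_i(x)\in A^G(Z)$.

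The argument is formal once the objects are correctly lined up, so there is no serious obstacle; the only thing needing care is the bookkeeping that the ``function $\Liu_i$ on $\pi^\PL(X)$'' introduced after Lemma \ref{diamond} recovers the motivic polylogarithm $\Liu_i(x)\in A_i(Z)$ exactly by precomposition with $\ka^\PL(x)$ --- equivalently, that the $\tau$-trivialization used to define $\Liu_i(x)$ and the projection to the polylog quotient are compatible, which is immediate from the definitions in segments \ref{unfun} and \ref{new}.
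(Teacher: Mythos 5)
Your proof is correct and follows exactly the paper's argument: unwind the definition of $\Liu_i(x)$ from \ref{unfun}(*), use Lemma \ref{diamond} to descend $f_{10\cdots 0}$ to $\pi^\PL(X)$, and use Lemma \ref{shmats1} to factor the cocycle through $\pi^G(Z)$. The paper states this in one sentence; you have merely spelled out the same bookkeeping in more detail.
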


\begin{proof}
Returning to \ref{unfun}(*), we saw in lemma \ref{diamond} that the function $f_{10 \cdots 0}$ factors through $\pi^\PL(X)$, and we saw in lemma \ref{shmats1} that the cocycle $\ka^\PL(x)$ factors through $\pi^G(Z)$.
\end{proof}

\begin{ssprop}%%%%%%
The map
\[
Z^1(\pi_1^\un(Z), \pi^\PL(X))^\Gm
 \from
   Z^1(\pi^G(Z), \pi^\PL(X))^\Gm
\]
induced by pulling back a $\Gm$-equivariant cocycle along the projection
\[
\pi_1(Z) \surj \pi^G(Z)
\]
is bijective. In particular (given that we're working with the polylogarithmic quotient of $\pi_1^\un(X)$), replacing the full mixed Tate Galois group $\pi_1^\un(Z)$ by its Goncharov quotient will have no further effect on the resulting loci $X(\Zp)_n$.
\end{ssprop}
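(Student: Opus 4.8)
The plan is to observe that both sides of the asserted bijection are really just sets (or schemes) of $\Gm$-equivariant \emph{homomorphisms}, after which the statement becomes a formal consequence of Lemma \ref{shmats1}. First I would recall from \S\ref{new} that, because $\nN^\PL(X)$ is a sum of Tate motives (Deligne), the group $\pi_1^\un(Z)$ acts trivially on $\pi^\PL(X)$; since the $\pi^G(Z)$-action on $\pi^\PL(X)$ is induced from the $\pi_1^\un(Z)$-action via the projection, it too is trivial. For a trivial coefficient action a ($\Gm$-equivariant) $1$-cocycle $c$ satisfies $c(gh) = c(g)c(h)$, i.e.\ it is nothing but a ($\Gm$-equivariant) homomorphism of (pro-)group schemes; hence
\[
Z^1(\pi_1^\un(Z), \pi^\PL(X))^\Gm = \Hom_\Gm\!\big(\pi_1^\un(Z), \pi^\PL(X)\big),
\]
and likewise with $\pi^G(Z)$ in place of $\pi_1^\un(Z)$, and under these identifications the map in the statement is precomposition with the projection $q \colon \pi_1^\un(Z) \surj \pi^G(Z)$.

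Next I would check the two halves of bijectivity separately. Injectivity is formal: $q$ is a faithfully flat surjection of affine $\QQ$-group schemes, hence an epimorphism, so $\varphi \circ q = \psi \circ q$ implies $\varphi = \psi$. Surjectivity is precisely the content of Lemma \ref{shmats1}, which says that every $\Gm$-equivariant homomorphism $\pi_1^\un(Z) \to \pi^\PL(X)$ factors through $\pi^G(Z)$; the factoring homomorphism is then unique by the injectivity just established. The concluding sentence of the statement follows at once, since the loci $X(\Zp)_n$ are extracted from the scheme-theoretic image of the universal cocycle-evaluation map and that map's source is $Z^1$, which the above shows is unchanged.

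I do not expect any genuine obstacle here: essentially all of the content has been front-loaded into Lemma \ref{shmats1}. The one point that deserves a line of care is the identification $Z^1 = \Hom$ in the first paragraph --- one should make sure it is compatible with the $\Gm$-equivariance and with whatever (pro-)scheme structure is placed on $Z^1$. If one wants the bijection on the level of functors of points rather than just $\QQ$-points, the same argument works after base change to an arbitrary $\QQ$-algebra $R$: Lemma \ref{shmats1} applies to $\pi_1^\un(Z)_R \to \pi^\PL(X)_R$ because its proof only uses the weight grading and the identity $[\nN^\PL(X)_{\le -2}, \nN^\PL(X)_{\le -2}] = 0$, both of which are already defined over $\QQ$.
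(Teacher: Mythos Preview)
Your proposal is correct and follows the same approach as the paper, which simply records the proposition as a ``direct consequence of Lemma~\ref{shmats1}''. You have merely unpacked what the paper leaves implicit: that cocycles are homomorphisms because the action is trivial, and that injectivity is automatic from the surjectivity of $\pi_1^\un(Z)\surj\pi^G(Z)$.
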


\begin{proof}
Direct consequence of lemma \ref{shmats1}.
\end{proof}

We consider the following strengthening of the depth-1 part of Goncharov's depth filtration conjecture \cite{GonGal}. For any prime $q$, let 
\[
Z_{> q} =
 \Spec \ZZ  \setminus 
 \{ 
 \mbox{primes} \le q
 \}.
\]
For any graded algebra $A$, we let $A_{[\le n]}$ denote the subalgebra generated by elements of degree $\le n$.

\begin{ssconj}[Integral depth-1 conjecture]%%%%
\label{depth}
For every prime $q_s$ and every $n \in \NN$, there exists a $q_M \ge q_s$ such that $A^G_{[\le n]}(Z_{>q_M})$ is generated as a $\QQ$-algebra by the elements $\logu(q')$ for $q'$ prime $\le q_M$, the elements $\zeu(n)$ for $n$ odd $\ge 3$ and the elements $\Liu_i(a)$ for $n \ge i \ge 2$ and $a \in X(Z_{>q_M})$.
\end{ssconj}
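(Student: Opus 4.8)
The plan is to induct on $n$, reducing at each weight to a statement about the indecomposable quotient of $A^G$, and then to recognize what remains as an \emph{integral} refinement of the depth-$1$ case of Goncharov's conjecture.

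\emph{Reduction to the Lie algebra.} Fix $n$. A family of homogeneous elements of $A^G(Z)$ generates $A^G_{[\le n]}(Z)$ as a $\QQ$-algebra if and only if, for each weight $1\le w\le n$, the weight-$w$ members have leading terms — the linear functionals obtained by restricting a weight-$w$ function on the prounipotent group $\pi^G(Z)$ to $\nN^G(Z)_{-w}$ — spanning $\big(\nN^G(Z)_{-w}\big)^\lor$. For $w=1$ this is immediate: $\nN^G(Z)_{-1}=\nN(Z)_{-1}$, so $A^G_1(Z)=A_1(Z)=\Ext^1_{\MT(Z)}(\QQ(0),\QQ(1))$, which by Borel is spanned by the $\logu(q')$ for primes $q'\le q_M$ once $q_M\ge q_s$. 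For $w\ge 2$ the weight-$w$ members of the proposed generating set are exactly $\zeu(w)$ (present iff $w$ is odd) and the values $\Liu_w(a)$, $a\in X(\ZqM)$, since products of lower-weight generators are decomposable and contribute nothing beyond their span by the inductive hypothesis. So the inductive content is: for $q_M$ large, the leading terms of $\zeu(w)$ and $\{\Liu_w(a)\}_{a\in X(\ZqM)}$ span $\big(\nN^G(\ZqM)_{-w}\big)^\lor$ for each $2\le w\le n$ (the weights $<n$ being re-checked over the possibly larger $\ZqM$, using that the directions created by the newly inverted primes are seen by polylogarithms of points supported on them).

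\emph{Locating the primitives; dualizing.} Because $\pi^G(Z)$ has the same abelianization as $\pi_1^\un(Z)$, the primitives of $A^G(Z)$ in weight $w$ coincide with those of $A(Z)$, namely $\Ext^1_{\MT(Z)}(\QQ(0),\QQ(w))$ sitting in $A_w$ as the kernel of the reduced coproduct; and since $A^G(Z)$ is a polynomial Hopf algebra in characteristic $0$, these primitives inject into the indecomposables. By Borel this primitive subspace is $\QQ\,\zeu(w)$ for $w$ odd $\ge 3$ and $0$ for $w$ even, so $\zeu(w)$ covers it. What remains is, dually, that the cocycles $\ka^\PL(a)\colon\pi^G(Z)\to\pi^\PL(X)$ — which factor through $\pi^G(Z)$ by Lemma \ref{shmats1} and whose composite with the coordinate $\Liu_w=f_{10\cdots 0}$ of Lemma \ref{diamond} is $\Liu_w(a)$ — have leading terms that, together with that of $\zeu(w)$, span $\big(\nN^G(Z)_{-w}\big)^\lor$; equivalently, that the induced functionals separate the commutator subspace $[\nN^G(Z),\nN^G(Z)]_{-w}$.

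\emph{The depth-$1$ input and the main obstacle.} Over $\Spec\QQ$ and forgetting ramification, this residual assertion is precisely the weight-$w$ case of the depth-$1$ part of Goncharov's depth filtration conjecture. For $w=2$ it holds — it is the surjectivity of the dilogarithm / Bloch-group regulator (Bloch, Suslin) — and the version needed is its integral strengthening: for $q_M$ large, the solutions of the $S$-unit equation with $S=\{\text{primes}\le q_M\}$ already produce enough motivic dilogarithms $\Liu_2(a)$ to span $\big(\nN^G(\ZqM)_{-2}\big)^\lor$ modulo $\zeu$ and products of $\logu(q')$'s; this is the ``half-weight-$2$'' analysis of \cite{CKtwo} transported to $\ZqM$, and it anchors the induction. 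For $w>2$ one peels off weight through $\Delta'$: given $a\in A^G_w(\ZqM)$, express $\Delta'(a)\in\bigoplus_{i+j=w,\;i,j\ge 1}A^G_i(\ZqM)\otimes A^G_j(\ZqM)$ via the lower-weight generators supplied by induction, subtract a polynomial in those generators matching the decomposable part, and reduce to an element whose reduced coproduct is a weight-$w$ primitive cocycle that must be realized by a $\QQ$-combination of $\zeu(w)$ and the $\Liu_w(a)$'s. The genuine obstacle is exactly this last step: there may be weight-$w$ primitive cocycles in $A^G$ not hit by classical polylogarithms — this is the non-vanishing of $\Ext^2$ in the category of ``Goncharov motives'' and is the whole difficulty of Goncharov's conjecture, open for $w\ge 4$ even over $\QQ$ — and, compounding it, the conjecture asks for a single $q_M$ depending only on $n$ and $q_s$, which requires an effective handle on how $S$-unit solutions fill out $\big(\nN^G(\ZqM)_{-w}\big)^\lor$ as $q_M$ grows. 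Thus the honest status of this plan is a reduction of the conjecture to (i) Goncharov's depth-$1$ conjecture and (ii) an effective, integral refinement of the weight-$\le n$ regulator surjectivities, with the weight-$2$ case as the understood base.
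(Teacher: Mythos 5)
The statement you set out to prove is labeled a \emph{Conjecture} in the paper (Conjecture \ref{depth}), and the paper offers no proof of it. The only things established there are that it holds trivially for $\Spec \ZZ$ (where $A^G$ is spanned by the $\zeu(n)$ for $n$ odd), and that the \emph{basis for receding $Z$} algorithm of \S\ref{tbasis} can verify it experimentally in bounded weight; the paper's theorems are explicitly \emph{conditional} on it. So there is no proof in the paper to compare yours against, and your write-up --- to its credit --- is not actually a proof either: it is an honest reduction that terminates at the open problem. Your structural reductions are sound and consistent with how the paper frames things: generating $A^G_{[\le n]}$ is equivalent to spanning the indecomposables $L^G_w = A^G_w/D^G_w$ in each weight $w \le n$ (this is Proposition \ref{anyset}); the primitives (kernel of $\Delta'$) satisfy $E^G_w = E_w = \Ext^1_{\MT(Z)}(\QQ(0),\QQ(w))$ and are covered by Borel's computation (Remark \ref{bat}); and the residual assertion is that the classical $w$-logarithms at $\ZqM$-points fill out the rest of $L^G_w$, with a single effective $q_M$ depending only on $(q_s,n)$. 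But that residual assertion \emph{is} the conjecture. As you say yourself, for $w \ge 4$ it contains the open depth-$1$ part of Goncharov's depth filtration conjecture, compounded by an integrality/effectivity requirement that is not implied by the rational statement (the paper stresses in Remark \ref{rki} that $X(Z)$ can even be empty while $A^G(Z)$ is large, which is why the tapering to $\ZqM$ is built into the statement at all).

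Two smaller cautions on the parts you treat as known. First, your appeal to Bloch--Suslin in weight $2$ addresses surjectivity of the dilogarithm regulator over a field; for $Z \subset \Spec\ZZ$ one has $\Ext^1(\QQ(0),\QQ(2)) = 0$, so the weight-$2$ content is entirely about whether $S$-unit solutions for $S = \{\mbox{primes} \le q_M\}$ produce enough motivic dilogarithms to span $L^G_2(\ZqM)$ --- the paper points to the half-weight-$2$ analysis of \cite{CKtwo} precisely because this integral statement is not automatic, and it notes that the case settled there is $Z = \Spec K$, not the tapered integral schemes. Second, your step ``inject the primitives into the indecomposables and then handle the rest'' quietly crosses the point the paper flags in Remark \ref{bat}: the category of Goncharov motives has nontrivial higher extension groups, so the indecomposables of $A^G_w$ strictly exceed the primitives for $w \ge 4$, and there is no cohomological mechanism known that forces the excess to be polylogarithmic. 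In short: your proposal is a correct and useful analysis of what the conjecture reduces to, but it does not prove the statement, and the paper does not claim to either.
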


\ssegment{}{}%%%%%%
As an example, $A^G(\Spec \ZZ)$ is spanned by $\zeu(n)$ for $n$ odd $\ge 3$, so the integral depth-1 conjecture holds for $\Spec \ZZ$.

\ssegment{rk}{Remark}%%%
Let us put this conjecture in context with a brief preview of things to come. In section \ref{tbasis} below we construct an algorithm that takes a natural number $n$ and a prime number $q_s$ as input, and that upon halting outputs two primes $q_s \le q_M < p$ and a family of points $a_{i,j} \in X(Z_{>q_M})$, such that
\begin{enumerate}
\item
if the algorithm halts then $\{\Liu_i(a_{i,j})\}_{i,j}$ forms an algebra basis of $A^G(Z_{>q_M})$ in half-weights $\le n$, and
\item
if conjecture \ref{depth} holds, then the algorithm halts.
\end{enumerate}
We emphasize that while the halting is conditional, the validity of the output is unconditional. In particular, our algorithm may be used to verify the conjecture experimentally up to any given bound on the weight. Our choice to work with open subschemes of $Z$ of the form $Z_{>q_M}$ (instead of searching through all open subschemes, as we do in \cite{mtmueii}) means that the scope of our search, as we attempt to construct a basis, is smaller. It has the disadvantage, however, that we may have to increase the auxiliary prime $p$.

\ssegment{rki}{Remark}%%%%%%
Let us further explain the role played by the scheme $Z_{>q_M}$ in our point-counting algorithm. For general $Z$, $A^G(Z)$ may not be generated by polylogarithms $\Liu_i(a)$ with $a\in X(Z)$. For example, if $Z = \Spec \ZZ[1/q]$ for $q$ a prime $\neq 2$ then
\[
A^G(\ZZ) \neq A^G(\ZZ[1/q])
\]
(and $A^G(\ZZ[1/q])$ is not even generated over $A^G(\ZZ)$ by $\logu(q)$) but
\[
X(Z) = \emptyset.
\]
Moreover, we have no direct way of deciding if a given rational linear combination of $\Liu_n(a)$'s ($a\in X(\QQ)$) is unramified over $Z$ (i.e. is contained in $A(Z)$). Our method, present already in a more general (but less precise) form in \cite{mtmueii}, is rather indirect. We first \textit{taper} $Z$ down to a $Z_{>q_M}$. We then construct a polylogarithmic basis of $A^G(Z_{>q_M})$ (up to a given weight). If we insist that our basis be compatible with the extension spaces, we may then generate an associated shuffle basis. We then construct a change-of-basis matrix which relates our polylogarithmic basis to our shuffle basis. Finally, we use our shuffle basis to identify the subspace 
$
A^G(Z) \subset A^G(Z_{>q_M}).
$\footnote{
However, there's a caveat, which is why there's a further remark, which should be thought of as being in smaller print. 
}

\ssegment{para}{Further to remark \ref{rki}}%%%%%
Actually, since our change of basis matrix is only a $p$-adic approximation, we can only identify the subspace $A^G(Z)$ inside $A^G(Z_{>q_M})$ up to given $p$-adic precision. This means that, for all we know, the polylogarithmic basis we construct may only be a linearly independent set of the right size, $p$-adically close to $A^G(Z)$. This issue may be dealt with roughly as follows. We make sure that our basis elements are sufficiently spread out (modifying if necessary) to ensure that their projection onto $A^G(Z)$ remains linearly independent. We refer to the resulting basis as our ``abstract basis'' since its definition is not constructive. We subsequently carry out all computations for both the abstract and the (``concrete'') polylogarithmic basis, keeping track of the accumulated error. 

Issues of this sort were already treated quite carefully in \cite{mtmueii}, which is consequently littered with double-tildes. Here we limit ourselves to mentioning these issues only in passing, since they tend to cloud the exposition and obscure the essential features of our construction. Rather, we feel that these issues are best relegated to a future computational article in which the algorithm presented here is given in a sort of paragraph-style pseudo code, divorced entirely from its theoretical backdrop. \textit{That} article, we hope, will come along with Sage code and a wealth of numerical results. In the meantime, in working out small-scale examples with a human touch (as we did in \cite{CorwinDCI}), these issues can usually be circumvented by replacing $p$-adic approximations of relations between values of polylogarithms by actual known relations found in the literature about  polylogarithms.\footnote{
But here again there is a caveat, which is why there's yet another remark, in an even smaller font.
}

\ssegment{glue}{Further to remark \ref{para}}%%%%%
Actually, this particular issue was dealt with differently in \cite{mtmueii}. There, we completely avoided constructing a basis of $A(Z)$. Instead, after finding an open subscheme $Z^o \subset Z$ (which plays the role played by our $Z_{>q_M}$) and a basis of $A(Z^o)$, we modified the universal evaluation map to allow coefficients in the larger $A(Z^o)$. This meant that the difference between $A(Z)$ and $A(Z^o)$ was dealt with within the \textit{geometric algorithm}. However, as we noticed during our work on \cite{CorwinDCI}, this made our algorithm quite inefficient in cases of interest. For instance, it meant that the geometric algorithm for the case $Z = \Spec \ZZ[1/q]$ must grow with $q$. 

A different issue, which comes up only for larger number fields, was, however, dealt with in a manner similar to that indicated in remark \ref{para}. When a basis for the extension spaces is not known, we are unable to construct one algorithmically. Instead, we construct a linearly independent set $p$-adically close to the extension space in question and proceed as indicated above.

\segment{watch}{Application to integral points}%%%%%%
Let $I_\m{BC} = \per^\sharp$ denote the map
\[
\Spec \Qp \to \pi_1^\un(Z)
\]
induced by the period map (since it is essentially given by \textit{Besser-Coleman Integration}). Let $\pi_{\ge -n}^\PL(X)$ denote the quotient by the $n$th step of the descending central series. As explained in the proof of Proposition 2 of Kim \cite{kimi},
 the functor from $\QQ$-algebras to sets
\[
R \mapsto 
Z^1(\pi^G(Z)_R, \pi_{\ge -n}^\PL(X)_R)^\Gm=
 \Hom^\Gm(\pi^G(Z)_R, \pi_{\ge -n}^\PL(X)_R)
\]
of $\Gm$-equivariant cocycles, is represented by an affine space 
\[
\mathbf{Z}^1(\pi^G(Z), \pi_{\ge -n}^\PL(X))^\Gm
\]
over $\QQ$.\footnote{
A particularly concrete construction of the isomorphism to affine space is given in \cite[Corollary 3.11]{CorwinDCI}
 and indicated in remark \ref{phi} below.
} Let $\ev^G$ denote the universal evaluation map
\[
 \pi^G(Z) \times
\mathbf{Z}^1
\big(\pi^G(Z), \pi_{\ge -n}^\PL(X)\big)^\Gm 
 \to
 \pi^G(Z) \times  \pi_{\ge -n}^\PL(X) 
\]
\[
\ev^G(\ga,\phi) = (\ga, \phi(\ga)).
\]

\ssegment{cousin}{}%%%%%
Upon taking $\Qp$-points (as $\QQ$-schemes), the evaluation map fits into a commuting square
\small
\[
\xymatrix{
X(Z) \ar[r] \ar[d]_-{\ka}
&
X(\Zp) \ar[d]^-{\be}
\\
\Big(
\pi^G(Z) \times
\mathbf{Z}^1
\big(\pi^G(Z), \pi_{\ge -n}^\PL(X)\big)^\Gm
\Big) (\Qp) 
\ar[r]_-{\ev^G}
&
\Big(
\pi^G(Z) \times  \pi_{\ge -n}^\PL(X) 
\Big)(\Qp)
}
\]
%%%%
sending 
%%%%
\[
\xymatrix{
% line 1
x \ar@{|->}[d]_-{\ka}
&&
y \ar@{|->}[d]^-{\be}
\\
% line 2
\Big(  I_\m{BC}, \ka(x) \Big) 
\ar@{|->}[r]_-{\ev^G}
&
\left(
I_\m{BC},
{\begin{pmatrix}
\log^p(x) \\
 \Li_1^p(x)
 \\ \Li_2^p(x) 
\\ \vdots
\end{pmatrix}}
\right)
& 
\left(
I_\m{BC},
{\begin{pmatrix}
\log^p(y) \\
 \Li_1^p(y)
 \\ \Li_2^p(y) 
\\ \vdots
\end{pmatrix}}
\right)
}
\]
which constitutes our computable cousin of ``Kim's cutter''.\footnote{
We recall that the commutativity is a rather long story which, at least in one possible approach, starts with Olsson's nonabelian $p$-adic Hodge theory; see \cite{mtmue}.
}

\ssegment{sym}{}%%%
For us, the \emph{polylogarithmic Chabauty-Kim locus} $X(\Zp)_n$ is a locally analytic (or ``Coleman analytic'') subspace of $X(\Zp)$. To define it, we take the scheme theoretic image of $\ev^G$, we pull back along $\be$, and finally we symmetrize with respect to the $S_3$ action. This last step means that we close the set of generators under the two operations $F(z) \mapsto$ 
\begin{align*}
\tag{*}
F(1-z),
&&
\mbox{and}
&&
F \left( \frac{1}{z} \right).
\end{align*}
We also consider $X(Z)$ as a locally analytic subspace of $X(\Zp)$ with reduced structure.\footnote{
In \cite{CorwinDCI} we use the notation $X(\Zp)_n^{S_3}$ to distinguish this locus from the \textit{un}-symmetrized version considered previously. 
} 

\begin{ssconj}[Convergence of polylogarithmic loci]%%%%%%
\label{kim}
Let $Z$ be an open subscheme of $\Spec \ZZ$ and let $p$ be a closed point of $Z$. Then for $n$ sufficiently large we have an equality of locally analytic subspaces of $X(\Zp)$
\[
X(Z) = X(\Zp)_n.
\]
(In particular, for such $n$, the points of $X(Z)$ are not double roots of the $n$th Chabauty-Kim ideal.)
\end{ssconj}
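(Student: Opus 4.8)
A strategy for a proof --- necessarily conditional, since the statement is open --- would run as follows. The easy half is the inclusion $X(Z) \subseteq X(\Zp)_n$, which holds for every $n$: given $x \in X(Z)$, the commuting square of \ref{cousin} exhibits $\be(x)$ as the image under $\ev^G$ of the $\Qp$-point $(I_\m{BC}, \ka(x))$, so $\be(x)$ lies in the $\Qp$-points of the scheme-theoretic image of $\ev^G$; and since $X(Z) \subseteq \thrpl(Z)$ is stable under the $S_3$-action on $\thrpl$, it also survives the symmetrization step of \ref{sym}. Hence the whole content of the conjecture is the reverse inclusion $X(\Zp)_n \subseteq X(Z)$ for $n \gg 0$, together with the transversality refinement that points of $X(Z)$ are not multiple roots of the $n$th Chabauty-Kim ideal.

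For the reverse inclusion I would argue in three stages. \textbf{Stage 1 (finiteness).} For $n$ large, the scheme-theoretic image of $\ev^G$ has positive codimension in $\pi^G(Z) \times \pi_{\ge -n}^\PL(X)$ --- the polylogarithmic analogue of Kim's dimension count \cite{kimi}, extended over open integer schemes in \cite{KimTangential} --- so its pull-back along $\be$, followed by the $S_3$-symmetrization, is a finite Coleman-analytic subset of $X(\Zp)$, and it suffices to rule out spurious points one at a time. \textbf{Stage 2 (explicit equations).} Assuming the integral depth-$1$ conjecture \ref{depth}, taper $Z$ down to a $Z_{>q_M}$ as in section \ref{tbasis} and use \ref{depth} to present $A^G(Z_{>q_M})$ as an explicit polynomial algebra on logarithms, odd zeta values, and polylogarithms $\Liu_i(a)$; then, via the change-of-basis matrix of \ref{rki}, identify the subring $A^G(Z) \subseteq A^G(Z_{>q_M})$, so that the defining equations of the image of $\ev^G$ become the explicit family of polynomials in $\log^p$ and the $\Li_i^p$ described (following Brown) in the body of the paper. \textbf{Stage 3 (no spurious points).} Finally, show that the common zero locus in $X(\Zp)$ of these polynomials, after symmetrization, contains no $\Qp$-point beyond $X(Z)$; equivalently, that the only $y \in X(\Zp)$ whose tuple $\big(\log^p(y), \Li_1^p(y), \Li_2^p(y), \dots\big)$ satisfies every motivic functional relation valid on $X(Z)$, in all the $S_3$-conjugate coordinates, is a genuine $Z$-integral point.

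Stage 3 is the main obstacle, and it is a real one: it is precisely where the conjecture is open. The difficulty is not merely quantitative --- Kim's finiteness bounds the cardinality of the locus but does not identify it --- nor is it reducible to the usual form of Kim's conjecture, since the polylogarithmic quotient yields an a priori \emph{larger} locus than the full unipotent quotient, and this gap is not closed by symmetrization. Moreover the symmetrization is genuinely necessary: as recorded in the introduction, $-1 \in X(\Zp)_n$ for all $n$ when $Z = \Spec \ZZ[1/q]$ with $q \neq p$, so the unsymmetrized statement is simply false, and any proof of Stage 3 must exploit the symmetry-breaking interaction of the polylog quotient with roots of unity rather than hope to evade it. Even granting \ref{depth} together with the $p$-adic period conjecture --- which, as noted in \ref{rki}, removes the rational-versus-$p$-adic discrepancy in the scheme-theoretic image --- one is still left needing $X(\Zp)_n \subseteq X(Z)$, for which no proof is known in any case with $X(Z) \neq \emptyset$ and $n$ beyond the reach of direct computation.

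Accordingly, the realistic near-term target is not a proof but verification in examples: run the algorithm of section \ref{tbasis} for a given $Z$ and weight bound $n$, assemble the $n$th Chabauty-Kim ideal from the polynomials of Stage 2, and check --- by $p$-adic approximation, or by known identities among polylogarithms where these are available --- that its symmetrized zero locus in $X(\Zp)$ equals $X(Z)$ and that each point of $X(Z)$ occurs with multiplicity one. This is what was carried out for $Z = \Spec \ZZ[1/3]$ in depth $4$ in \cite{CorwinDCI}, and what the forthcoming Sage implementation is meant to automate for further $Z$ and larger $n$.
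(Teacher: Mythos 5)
The statement you were asked about is Conjecture \ref{kim}, not a theorem: the paper offers no proof of it, and its truth is precisely one of the hypotheses on which the halting of the point-counting algorithm (Theorem \ref{pointcountingthm}(2)) is conditional. Your proposal is correctly calibrated to this fact. The only part you actually prove --- the inclusion $X(Z) \subseteq X(\Zp)_n$ for all $n$ --- is right and follows, as you say, from the commuting square of \ref{cousin} together with the $S_3$-stability of $X(Z)$ under the symmetrization of \ref{sym}. You correctly locate the entire open content in the reverse inclusion plus the multiplicity-one refinement, and your observations that symmetrization is genuinely necessary (because of $-1 \in X(\Zp)_n$ for $Z = \Spec\ZZ[1/q]$) and that the conjecture does not reduce to Kim's original conjecture (the polylogarithmic and rationally-defined loci are a priori larger, even granting the period conjecture) match the paper's own discussion in the introduction and in \ref{rki}. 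Since there is no proof in the paper to compare against, no comparison of methods is possible; what the paper does in practice is exactly what your last paragraph describes, namely conditional verification in examples such as $\ZZ[1/3]$ in depth $4$.
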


\ssegment{wave}{Remark}%%%%
In segment \ref{localg} below, we complete the construction of an algorithm which upon halting computes the locus $X(\Zp)_n$ to given precision; this is our \emph{loci algorithm}. This algorithm proceeds in several steps. In one of these, which we refer to as the \emph{geometric algorithm} (\S\ref{geomalg}), we compute the scheme theoretic image of the map $\ev^G$ with respect to coordinates induced by a shuffle basis for $A(Z)$. In a second step, we use the change of basis matrix constructed in the \emph{change of basis algorithm} (\S\ref{cob}) to convert the resulting equations into equations in coordinates associated to the polylogarithmic basis constructed in our \emph{basis algorithm} (\S\ref{tbasis}). The remaining steps amount to a straightforward application of \textit{Lip service} \cite{Lip}. Pulling back along $\be$ merely means replacing unipotent motivic polylogarithms by $p$-adic polylogarithms. The symmetrization was spelled out in \ref{sym}(*). Finally, the algorithm of loc. cit. allows us to obtain local power series expansions to given $p$-adic and geometric precision.

\ssegment{pence}{Remark}%%%% 
In terms of the \emph{loci algorithm} our \emph{point counting algorithm} from \cite{mtmueii} remains unchanged. We review its construction in segment \ref{pcalg} below, referring back to \cite{mtmueii} for details. In terms of the conjectures above and the algorithm below, our point-counting theorem (which may equally be called a point-\textit{finding} theorem) is as follows. If $Z$ is an open subscheme of $\Spec \ZZ$ in which the largest prime \emph{excluded} is $q_M$, we refer to $Z_{> q_M}$ as \emph{the tapered scheme associated to $Z$}. 

\begin{ssthm}[Point-counting]\label{pointcountingthm}
Let $Z$ be an open subscheme of $\Spec \ZZ$. 
\begin{enumerate}
\item
If the point-counting algorithm halts for the input $Z$, then its output is equal to $X(Z)$.
\item
Assume the conjectured nonvanishing of the $p$-adic zeta values $\ze^p(k)$ ($k$ odd $\ge 3$). If the integral depth-1 conjecture (\ref{depth}) holds for the tapered scheme associated to $Z$, and if convergence of polylogarithmic loci (\ref{kim}) holds for $Z$, then the point-counting algorithm halts for the input $Z$.
\end{enumerate}
\end{ssthm}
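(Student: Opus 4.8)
The plan is to split the two halves of the theorem. Part (1)---unconditional correctness of the output---should follow by tracing through the construction of the point-counting algorithm reviewed in segment \ref{pcalg}. The algorithm produces the $p$-adic locus $X(\Zp)_n$ (to some precision and for increasing $n$), and by the commuting square of \ref{cousin} the image of $X(Z)$ in $X(\Zp)$ lands inside every $X(\Zp)_n$; so the point-counting step, which intersects these loci with the set of $\ZZ$-points of $\thrpl$ that are visibly integral, can only ever return points that genuinely lie in $X(Z)$, and once the locus is finite the algorithm can certify that it has found all of them. The key input here is that the symmetrized locus still contains $X(Z)$, which is automatic since the three Möbius operations in \ref{sym}(*) preserve $X(Z)$; the rest is bookkeeping with $p$-adic precision, exactly as in \cite{mtmueii}, and is the reason (1) is unconditional.

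For part (2), I would organize the halting argument around the three algorithms that feed the loci algorithm: the basis algorithm (\S\ref{tbasis}), the change-of-basis algorithm (\S\ref{cob}), and the geometric algorithm (\S\ref{geomalg}), followed by the Lip-service step. First, the basis algorithm: by Remark \ref{rk}, if the integral depth-1 conjecture \ref{depth} holds for the tapered scheme $Z_{>q_M}$, then the basis algorithm halts, producing a polylogarithmic algebra basis of $A^G(Z_{>q_M})$ in half-weights $\le n$; this is where the conjectural hypothesis on \ref{depth} is consumed, and it is also where the nonvanishing of the $p$-adic zeta values $\ze^p(k)$ enters, since the algorithm must recognize the classes $\zeu(k)$ $p$-adically via their regulators. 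Second, the change-of-basis and geometric algorithms are unconditional finite procedures (the geometric algorithm computes a scheme-theoretic image of an explicit polynomial map, per the discussion in the introduction), so they halt as soon as they are handed a basis. Third, the Lip-service step of \cite{Lip} halts and outputs local expansions to any requested precision. Hence the loci algorithm halts and computes $X(\Zp)_n$; then convergence of polylogarithmic loci \ref{kim} guarantees that for $n$ large enough this locus equals $X(Z)$ as a locally analytic subspace, in particular is finite and has reduced (no double-root) structure, which is precisely the termination criterion the point-counting algorithm tests for. So the point-counting algorithm halts.

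The main obstacle is matching the scheme over which the basis is constructed with the scheme over which we want to count points: the basis algorithm produces a basis of $A^G(Z_{>q_M})$, not of $A^G(Z)$, and one must descend---identifying the subspace $A^G(Z) \subset A^G(Z_{>q_M})$ via the shuffle basis and the change-of-basis matrix, as in Remarks \ref{rki}--\ref{glue}---without this descent step failing to terminate. The subtlety is that the change-of-basis matrix is only a $p$-adic approximation, so strictly one works with the "abstract" basis of \ref{para} in parallel; I would handle this exactly as the cited remarks indicate, carrying the accumulated $p$-adic error through and noting that the descent is a finite linear-algebra computation once the (approximate) matrix is in hand. A secondary point to be careful about is that tapering may force an increase in the auxiliary prime $p$ (noted in \ref{rk}), so one must check that the hypotheses---nonvanishing of $\ze^p(k)$ and convergence \ref{kim}---are invoked for the correct, possibly updated, $p$; but since both are conjectured for all $p$, this causes no logical difficulty.
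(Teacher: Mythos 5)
Your proposal is correct and follows essentially the same route as the paper: the paper explicitly declines to give a separate proof, stating that the theorem is ``proved as soon as the algorithm has been constructed,'' and your argument is precisely a walk-through of that construction, consuming conjecture \ref{depth} and the zeta nonvanishing in the basis algorithm, conjecture \ref{kim} in the termination test, and handling the descent from $Z_{>q_M}$ to $Z$ via the change-of-basis step as in remarks \ref{rki}--\ref{glue}. No gaps; if anything your write-up makes explicit the sandwich $X(Z)_n \subseteq X(Z) \subseteq X(\Zp)_n$ underlying part (1), which the paper leaves implicit.
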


\noindent
Unlike in \cite{mtmueii}, here we do not separate the proof of the \textit{point-counting theorem} from the construction of the \textit{point-counting algorithm}. Rather, we set ourselves the task of computing the data to be computed, and explain in down to earth terms, how we go about computing algorithmically. Thus, we consider the theorem to be proved as soon as the algorithm has been constructed. This task occupies the remainder of the article.

%%%%%%%%%%%%%%%%%%
\section{Setup}%%%%%%
%%%%%%%%%%%%%%%%%

\segment{}{}%%%%%%%
We continue to work with an open subscheme $Z$ of $\Spec \ZZ$ and a prime $p \in \ZZ$. Recall that $\pi_1^\un(Z)$ denotes the unipotent part of the fundamental group of mixed Tate motives unramified over $Z$ and $\nN(Z)$ denotes its Lie algebra, which has a natural grading --- we call the graded degree of a homogeneous element its \emph{half-weight}. Recall that $\nN^G(Z)$ denotes the Goncharov quotient of $\nN(Z)$ and that 
\[
\pi^G(Z) = \exp \nN^G(Z)
\]
denotes the associated quotient of $\pi_1^\un(Z)$. Recall that 
 $A(Z) = \Oo(\pi_1^\un(Z))$ denotes the graded Hopf algebra of functions on $\pi_1^\un(Z)$, and that $A^G(Z)$ denotes the subalgebra associated to $\pi^G(Z)$.
 
 Let
\[
d_i = \dim_\QQ \nN^G(Z)_i.
\]
Let $D_n^G(Z)$ denote the image of the product map
\[
\bigoplus_{i+j = n, \; i, j \ge 1} A^G_i \otimes A^G_j \to A^G_n.
\]
The Lie coalgebra $L^G := (\nN^G)^\lor$ is equal to the quotient
\[
L^G = A^G_{>0}/D^G = A^G_{>0} / (A_{>0}^G)^2
\]
of the augmentation ideal by its square.

Recall that $\Uu(Z) = A(Z)^\lor$ denotes the completed universal enveloping algebra of $\pi_1^\un(Z)$ and, adding the decoration `$G$' as usual, $\Uu^G(Z) = A^G(Z)^\lor$ denotes the completed universal enveloping algebra of $\pi^G(Z)$.

\begin{sprop}%%%%%%%%%%%
\label{anyset}
Any set of homogeneous elements of $A^G_{>0}$ which maps to a basis of $L^G$ forms an algebra basis for $A^G$. 
\end{sprop}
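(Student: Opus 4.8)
The proposition is the familiar statement that a connected graded commutative Hopf $\QQ$-algebra is the free polynomial algebra on any homogeneous lift of a basis of its indecomposables, applied here to $A^G = A^G(Z)$; I would prove it in two steps, generation and then algebraic independence, using only the structure recorded above. \emph{Generation:} fix a set $\{a_\alpha\}$ of homogeneous elements of $A^G_{>0}$ whose classes form a basis of $L^G = A^G_{>0}/D^G = A^G_{>0}/(A^G_{>0})^2$, and let $B \subseteq A^G$ be the $\QQ$-subalgebra they generate. Since $\pi^G(Z)$ is a connected prounipotent group, $A^G$ is connected, i.e. $A^G_0 = \QQ \subseteq B$. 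I claim $A^G_n \subseteq B$ for all $n$, by induction on half-weight. Given $a \in A^G_n$ with $n \ge 1$ (the case $n=0$ being settled), its class in $L^G_n$ is a $\QQ$-linear combination of the classes of the half-weight-$n$ generators among the $a_\alpha$; subtracting the corresponding combination of those generators, we may assume $a \in D^G_n = \sum_{i+j=n,\ i,j\ge 1} A^G_i A^G_j$. Each factor appearing here has half-weight strictly less than $n$, hence lies in $B$ by the inductive hypothesis, so the product lies in $B$ and therefore $a \in B$. Thus the graded $\QQ$-algebra map $\varphi \colon \QQ[\,x_\alpha\,] \to A^G$, from the polynomial ring on indeterminates $x_\alpha$ with $\deg x_\alpha$ the half-weight of $a_\alpha$, sending $x_\alpha \mapsto a_\alpha$, is surjective.

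\emph{Algebraic independence:} it remains to show $\varphi$ is injective, and since it is a surjection of graded $\QQ$-vector spaces with finite-dimensional graded pieces, it suffices to check that source and target have the same Hilbert series. The number of indices $\alpha$ with $a_\alpha$ of half-weight $n$ equals $\dim_\QQ L^G_n$, and the identification $L^G = (\nN^G)^\lor$ gives $\dim_\QQ L^G_n = d_n := \dim_\QQ \nN^G(Z)_n$; hence the Hilbert series of $\QQ[\,x_\alpha\,]$ is $\prod_n (1-t^n)^{-d_n}$. On the other side, dualizing degree by degree gives $\dim_\QQ A^G_n = \dim_\QQ \Uu^G(Z)_n = \dim_\QQ U(\nN^G(Z))_n$, and the Poincar\'e--Birkhoff--Witt theorem, applied to the graded Lie algebra $\nN^G(Z)$, identifies $U(\nN^G(Z))$ with $\operatorname{Sym}(\nN^G(Z))$ as graded vector spaces, whose Hilbert series is again $\prod_n (1-t^n)^{-d_n}$. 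The two series agree, so $\varphi$ is an isomorphism: the $a_\alpha$ are algebraically independent and generate, i.e. they form an algebra basis of $A^G$.

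\emph{Main obstacle:} there is essentially no obstacle beyond bookkeeping. The only inputs external to the elementary graded Nakayama argument are (i) connectedness of $A^G$, immediate from prounipotence of $\pi^G(Z)$, and (ii) the PBW Hilbert-series computation, which uses only the identifications $\Uu^G(Z) = A^G(Z)^\lor$ and $L^G = (\nN^G)^\lor$ already in place, together with finite-dimensionality of every graded piece of $\nN^G(Z)$. The single point needing a moment's care is that the $A^G$--$\Uu^G$ duality respects the half-weight grading degree by degree, so that the completion implicit in $\Uu^G$ affects no individual graded component; this is automatic since each $\nN^G(Z)_n$ is finite-dimensional.
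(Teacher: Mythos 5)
Your proof is correct, and it is worth noting where it coincides with and where it departs from the paper's argument. The generation half is the same graded Nakayama induction in both cases (the paper compresses it into the statement that the induced map $\phi:\QQ[S']\to\QQ[S]$ is a graded homomorphism inducing an isomorphism $I'/I'^2\to I/I^2$). The difference lies in how algebraic independence is obtained. The paper first observes that $A^G$ is \emph{a priori} a graded polynomial algebra $\QQ[S]$ with finitely many generators in each degree --- because it is the coordinate ring of a prounipotent group with $\Gm$-action whose abelianization has finite-dimensional graded pieces --- and then invokes a cited lemma (\cite[3.1.1]{mtmueii}) asserting that a graded homomorphism of such polynomial algebras inducing an isomorphism on indecomposables is an isomorphism. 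You instead avoid any appeal to the pre-existing polynomial structure of $A^G$ and prove freeness directly by matching Hilbert series: the source $\QQ[x_\alpha]$ has series $\prod_n(1-t^n)^{-d_n}$ by construction, and the target has the same series because $A^G$ is the graded dual of $\Uu^G = U(\nN^G)^{\wedge}$ and PBW identifies $U(\nN^G)$ with $\operatorname{Sym}(\nN^G)$ degree by degree. Your route is more self-contained (it reproves, in this instance, essentially what the cited lemma encapsulates) at the cost of importing PBW; the paper's route is shorter on the page but outsources the dimension count to the external reference. Your closing remark that the $A^G$--$\Uu^G$ duality is degreewise because each $\nN^G_n$ is finite-dimensional is exactly the point that makes the Hilbert-series comparison legitimate, and it is good that you flagged it.
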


\begin{proof}
After forgetting the counit and comultiplication, $A^G$ has the structure of a graded free $\QQ$-algebra 
\[
A^G = \QQ[S]
\]
with $S = \bigcup_{i=1}^\infty$ and $S_i$ finite for each $i$. This is simply because $A^G$ is the coordinate ring of a prounipotent group with $\Gm$-action such that the graded pieces of the abelianization are finite dimensional. Let $I = A^G_{>0}$ denote the ideal of $\QQ[S]$ of positively graded elements.

Let
\[
S' = \bigcup_{i=1}^\infty S'_i
\]
be a set of homogeneous elements of $A^G_{>0}$ which maps to a basis of $L^G = I/I^2$ and let $I'$ be the ideal of positively graded elements in $\QQ[S']$. Then the induced map of $\QQ$-algebras
\[
\phi: \QQ[S'] \to \QQ[S]
\]
preserves the grading and induces an isomorphism $I'/I'^2 \to I/I^2$. Hence by \cite[3.1.1]{mtmueii}, $\phi$ is an isomorphism.
\end{proof}

\segment{bat}{Remark}%%%%%%%
Recall from \S\ref{mtm}
 that the kernel $E_n = E_n(Z)$ of the reduced coproduct on $A(Z)_n$ is canonically isomorphic to the space 
\[
\Ext^1_Z(\QQ(0), \QQ(n))
\]
of extensions in mixed Tate motives over $Z$. Similarly, the kernel $E^G_n = E^G_n(Z)$ of the reduced coproduct on $A^G_n(Z)$ is equal to a space of extensions in the full subcategory of the category of mixed Tate motives consisting of objects whose associated representation factors through the Goncharov quotient. We will refer to such objects as ``Goncharov motives''.

It follows directly from the definition however, that we have an equality of spaces of extensions $E_n^G = E_n$; in the case at hand, both are spanned by logarithms and by the motivic zeta elements $\zeu(n)$ for $n$ odd. Consequently, the category of Goncharov motives must have nontrivial higher extension groups. This complicates the structure of the Hopf algebra $A^G$. As mentioned in the introduction, instead of analyzing its structure, we will work inside of $A$.

Let $q_M$ denote a prime sufficiently large compared to $q_s$ and $n$, and let $Z_{> q_M}$ denote the subscheme of $\Spec \ZZ$ obtained by removing all primes $\le q_M$ as above. After constructing a polylogarithmic algebra basis $\Bb^G$ for 
$A^G(Z_{>q_M})$ 
(which includes the zeta elements) in half-weights $\le n$, we will extend our basis arbitrarily to an algebra basis $\Bb$ of $A(Z_{>q_M})$. We will then define the generators $\si_{r}$, $r$ odd $\ge 3$, to be dual to the zeta elements relative to the given choice of basis. We will then have according to proposition 3.2.3 of \cite{mtmueii},
\[
\Uu(Z_{>q_M}) = 
\QQ\langle \langle 
\{ \tau_q \}_{q \le q_M}, 
\{\si_r\}_{r \mbox{ odd } \ge 3}
\rangle \rangle
\]
but with the $\si_r$ well defined only in the quotient
\[
\Uu \surj \Uu^G.
\]

\segment{bronze}{}%%%%%%
Returning to $X = \thrpl$, we recall that $\pi_1^\un(X)$ denotes the unipotent fundamental group at the tangent vector $\vec{1_0}$, and that $\pi^\PL(X)$ denotes its polylogarithmic quotient. We recall that the polylogarithmic quotient has canonical coordinates, which we denote by $\logu, \Liu_1, \Liu_2, \dots$, so that
\[
\Oo(\pi^\PL(X)) = \QQ[\logu, \Liu_1, \Liu_2, \dots]
\]
with $\logu$ in degree $1$ and $\Liu_i$ in degree $i$. Recall that in \S\ref{unfun} we associated to a $Z$-valued base-point $a$ of $X$ a 1-cocycle 
\[
\ka(a): \pi_1^\un(Z) \to \pi^\PL(X)
\]
and defined the (unipotent) motivic $n$-logarithm of $a$ by
\[
\Liu_n(a) := \ka(a)^\sharp(\Liu_n).
\]
More generally, if $R$ is a $\QQ$-algebra and 
\[
c: \pi_1^\un(Z)_R \to \pi^\PL(X)_R
\]
is a family of cocycles parametrized by $\Spec R$, we set
\[
\Liu_n(c):= c^\sharp(\Liu_n),
\]
an element of $A(Z) \otimes R$. Similarly, we set
\[
\logu(c) := c^\sharp(\logu).
\]

\sProposition{c}{
We denote the reduced coproduct by $\Delta'$. We have
\[
\Delta'\Liu_n = \sum_{i=1}^{n-1} \frac{(\logu)^{ i}}{i!}
\otimes \Liu_{n-i}.
\]
}

\begin{proof}
In view of the formula
\[
(\Liu_{e_0})^{ m} = m! \Liu_{(e_0)^m},
\]
this is just the deconcatenation coproduct of shuffle coordinates on a free prounipotent group.
\end{proof}

\sCorollary{d}{
A similar formula holds for $\Delta' \Liu_n(c)$ for any cocycle $c$:
\[
\Delta'\Liu_n(c) = \sum_{i=1}^{n-1} \frac{(\logu(c))^{ i}}{i!}
\otimes \Liu_{n-i}(c),
\]
 as well as for $\Delta'\Liu_n(z)$ for any $z \in X(Z)$.
}

\begin{proof}
Since $\pi_1^\un(Z)$ acts trivially on $\pi^\PL(X)$, a cocycle
\[
c: \pi_1^\un(Z) \to \pi^\PL(X)
\]
is simply a group homomorphism. This means $c^\sharp$ preserves coproducts. Hence,
\begin{align*}
\Delta'\Liu_n(c)
	&= \Delta' c^\sharp \Liu_n \\
	&= (c^\sharp \otimes c^\sharp)(\Delta' \Liu_n) \\
	&= 
	(c^\sharp \otimes c^\sharp)
	\left(
	\sum_{i=1}^{n-1} \frac{(\logu)^{ i}}{i!} \otimes \Liu_{n-i}
	\right) \\
	&= \sum_{i=1}^{n-1} \frac{(\logu(c))^{ i}}{i!}
	\otimes \Liu_{n-i}(c).
\end{align*}
\end{proof}

\segment{gonco}{Remark}%%%
Corollary \ref{d} may be upgraded in (at least) four different ways: (1) by replacing $c$ by a cocycle $\pi^\un_1(Z) \to \pi^\un_1(X,x)$ which is no longer a homomorphism, (2) by further puncturing $X$ at a finite number of rational points, (3) by replacing $\Liu_n$ by the function $\pi^\un_1(X,x) \to \AA^1_\QQ$ associated to an arbitrary word in a basis of $H_1^\dR(X)$, and (4) by replacing $\QQ$ by a general number field. The result (at least when $c$ comes from a rational point $y$) is Theorem 1.2 of Goncharov \cite{GonGal}, which has come to be known as the \emph{Goncharov coproduct formula}. As we mentioned in the introduction, one advantage of working with $n$-logarithms is that we can make do with the much simpler formula of Corollary \ref{d}.

\segment{ceqsetup}{}%%%%%%
Fix arbitrarily a set 
\[
\Si = \bigcup_{i \le -1} \Si_i
\]
of homogeneous free generators for $\nN(Z)$. Any word $w$ in the generators is naturally an element of the completed universal enveloping algebra $\Uu(Z)$. We denote the natural pairing 
\[
A(Z) \otimes_\QQ \Uu(Z) \to \QQ
\]
by $\langle - , - \rangle$, and the same after base-change to any $\QQ$-algebra $R$.

\begin{sprop}%%%%%%%%%%
\label{ceq}
Continuing with the situation and the notation of segments \ref{bronze}, \ref{ceqsetup}, fix natural numbers $ 0 \le r <n$ and elements $\tau_1, \dots, \tau_r \in \Si_{-1}$, and $\si \in \Si_{r-n}$.
 We then have
\[
\langle
\Liu_n (c),\si \tau_1 \cdots \tau_r
\rangle
 =
\langle
\Li^\uU_r (c),\si
\rangle
\langle \logu (c), \tau_1\rangle
\cdots 
\langle \logu (c), \tau_r \rangle
\]
and all other values $\langle \Li^\uU_n (c),w\rangle$ ($w$ a word in $\Si$) vanish.
\end{sprop}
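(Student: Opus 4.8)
My plan is to unwind the pairing through the Hopf-algebra duality between the shuffle algebra $A(Z)$ and the enveloping algebra $\Uu(Z)$, then substitute the coproduct formula of Corollary \ref{d} and collapse the result using the primitivity of $\logu(c)$. The fact that makes everything work is precisely that $\logu(c)$ is a \emph{primitive} element of $A(Z)\otimes R$: it equals $c^\sharp(\logu)$, and $\logu$ is a one-letter shuffle coordinate on $\pi^\PL(X)$, hence primitive, while $c^\sharp$ respects coproducts because $c$ is an honest homomorphism (as recorded in the proof of Corollary \ref{d}).

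In more detail: the concatenation product of $\Uu(Z)$ is dual to the deconcatenation coproduct of $A(Z)$, so for $r\ge1$ we have $\langle\Liu_n(c),\si\tau_1\cdots\tau_r\rangle=\langle\Delta\Liu_n(c),\,\si\otimes(\tau_1\cdots\tau_r)\rangle$, and since both $\si$ and $\tau_1\cdots\tau_r$ lie in the augmentation ideal of $\Uu(Z)$ we may replace $\Delta$ by the reduced coproduct $\Delta'$. Feeding in Corollary \ref{d}, each summand has one tensor slot carrying a power of $\logu(c)$ and the other carrying the residual polylogarithm; with the conventions there, the former meets $\tau_1\cdots\tau_r$ and the latter meets $\si$. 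I then invoke the elementary fact that for a primitive element $\xi$ of $A(Z)\otimes R$ and generators $u_1,\dots,u_k$ of $\nN(Z)$ one has $\langle\xi^i/i!,\,u_1\cdots u_k\rangle=0$ unless $i=k$, in which case it equals $\prod_j\langle\xi,u_j\rangle$ — proved by peeling off one $u_j$ at a time, using that for $u_j$ primitive the functional $\langle-,u_j\rangle$ annihilates products of two augmentation-ideal elements. Applied with $\xi=\logu(c)$ and the word $\tau_1\cdots\tau_r$, this leaves exactly the summand carrying the $r$-th power of $\logu(c)$, producing $\langle\Liu_{n-r}(c),\si\rangle\prod_{j=1}^r\langle\logu(c),\tau_j\rangle$, which is the right-hand side of the stated identity (the residual index being $n-r$, equivalently forced by matching half-weights). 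The case $r=0$ is the tautology $\langle\Liu_n(c),\si\rangle=\langle\Liu_n(c),\si\rangle$. For the vanishing clause, running the same computation on an arbitrary word $w=u_1\cdots u_k$ in $\Si$ shows that each letter meeting a $\logu(c)$-slot must pair nontrivially with $\logu(c)$, hence have half-weight $-1$; so $\langle\Liu_n(c),w\rangle=0$ unless $w$ has the asserted shape, and then additivity of half-weights forces $\si\in\Si_{r-n}$.

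I do not anticipate a genuine obstacle: given Corollary \ref{d}, the proposition is in essence its dualization. The points deserving attention are the primitivity of $\logu(c)$, the little lemma on pairing a power of a primitive with a word of generators, and keeping the tensor-factor orders consistent with Corollary \ref{d} so that it is $\logu(c)$ — and not the residual polylogarithm — that meets each $\tau_j$; the borderline case $n-r=1$, where the residual factor $\Liu_1(c)$ is itself primitive, causes no difficulty.
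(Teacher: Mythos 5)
Your argument is correct, but note that the paper itself does not prove this proposition: its ``proof'' is a citation to \cite[Proposition 3.10]{CorwinDCI} (the formula being attributed to a letter of F.~Brown), so your self-contained derivation is a genuine addition rather than a reproduction. Your route --- dualize the concatenation word against the reduced coproduct, substitute Corollary \ref{d}, and collapse each summand with the lemma $\langle \xi^i/i!, u_1\cdots u_k\rangle = \delta_{ik}\prod_j\langle\xi,u_j\rangle$ for $\xi$ primitive and $u_j$ primitive generators --- is the natural proof, and your proof of that lemma (peeling off one letter at a time, using that $\langle -,u_j\rangle$ kills products of augmentation-ideal elements) is sound; it is also what forces the vanishing clause, since every slot of the iterated coproduct except one carries a pure power of $\logu(c)$ and hence annihilates any letter outside $\Si_{-1}$. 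Two remarks on points you handled but should make explicit if you write this up. First, as your half-weight count shows, the right-hand side of the statement must read $\langle\Liu_{n-r}(c),\si\rangle$ rather than $\langle\Liu_r(c),\si\rangle$: with $\si\in\Si_{r-n}$ the pairing $\langle\Liu_r(c),\si\rangle$ vanishes for degree reasons unless $n=2r$, and the restatement in \S\ref{cob} (where the roles of $r$ and $s$ are swapped) confirms the intended indexing; you were right to correct this silently. Second, the tensor-order issue you flag is real: the written order of Proposition \ref{c} ($\logu$-powers in the left factor) combined with the standard duality $\langle f,uv\rangle=\langle\Delta f,u\otimes v\rangle$ would put $\si$ against the $\logu$-power, killing the term; consistency with the word $\si\tau_1\cdots\tau_r$ and with the computations in \S\ref{cob} requires the opposite orientation, which is the one you adopt. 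The paper is not internally consistent on this convention, so your resolution is the correct reading rather than a gap in your proof.
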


\begin{proof}
This formula appeared in a letter written by Francis Brown to I. Dan-Cohen and is proved in \cite[Proposition 3.10]{CorwinDCI}. 
\end{proof}

\segment{}{Definition}%%%%%%%
Let $k$ be a field with an absolute value. We say that vectors
\[
v_1, \dots, v_d \in k^n
\]
are \emph{$\ep$-linearly independent} if there exists a $d \times d$ minor whose determinant has absolute value $> \ep$.

\segment{}{Proposition}%%%%%%%%%
Let $k$ be a field with an absolute value $| \cdot |$ and $\tilde v_1, \dots , \tilde v_d$ vectors in $k^n$ which are \emph{$\ep$-linearly independent}.   Then there exists a number $\ep'>0$, algorithmically computable from the data $(\ep,\tilde v_1, \dots , \tilde v_d)$, which goes to $0$ as $\ep \to 0$, and such that for any family $v_1, \dots, v_d$ of vectors in $k^n$, if 
\[
|v_i -\tilde v_i| < \ep'
\]
for each $i = 1, \dots, d$, then the vectors $v_1, \dots , v_d$ are linearly independent. 

\begin{proof}
The proof reduces to the case $d = n$, to which we now restrict attention. The algorithmic computability will remain implicit. Given  $v = (v_1, \dots, v_d)$, $\tilde v = (\tilde v_1, \dots, \tilde v_d)$ as in the proposition, we can bound 
\[
| \det v - \det \tilde v |
\]
 by a positive number $\delta$ which depends only on $\ep'$ and on $\tilde v_1, \dots, \tilde v_d$, and decreases monotone to $0$ as $\ep' \to 0$. We outline the construction. For each $i = 1, \dots, n$, we let $\Delta_i$ denote the norm of the linear functional 
 \[
\det(\tilde v_1, \dots, \tilde v_{i-1}, \bullet, \tilde v_{i+1}, \dots, \tilde v_n).
\]
Then there is a positive number $\de_i$ (with the same properties as above) such that the norm of the linear functional 
\[
\det(v_1, \dots, v_{i-1}, \bullet, \tilde v_{i+1}, \dots, \tilde v_n)
\]
is bounded by $\Delta_i + \de_i$ whenever 
\[
|v_j - \tilde v_j | < \ep'
\]
for $j = 1, \dots, i-1$. Setting $\Delta$ equal to the maximum among $\Delta_1 + \de_1, \dots, \Delta_d+\de_d$, we find that
\begin{align*}
| \det v &- \det \tilde v | \\
	& \le \sum_{i=1}^d  \big| 
	\det(v_1, \dots, v_i, \tilde v_{i+1}, \dots, \tilde v_d ) -
	\det(v_1, \dots,  v_{i-1}, \tilde v_{i}, \dots, \tilde v_d )
	\big| 
	\\
	&=  \sum_{i=1}^d  \big| 
	\det(v_1, \dots, v_{i-1}, v_i - 
	\tilde v_i, \tilde v_{i+1}, \dots, \tilde v_d )
	\big|
	\\
	& \le \sum_{i=1}^d  \big| 
	\det(v_1, \dots, v_{i-1}, \bullet, \tilde v_{i+1}, \dots, \tilde v_d )
	\big| 
	\cdot |v_i - \tilde v_i |
	\\
	& < \sum_{i=1}^d (\Delta_i+\de_i) \cdot \epsilon'
	\\
	& \le d \cdot  \Delta \cdot \ep'
\end{align*}
whenever 
$
|v_j - \tilde v_j | < \ep'
$
for $j = 1, \dots, d$. Thus, we set $\delta := d \cdot  \Delta \cdot \ep'$ to complete our outline of the construction of $\delta$. The monotonicity of $\delta$ as a function of $\ep'$ ensures that $\ep' \mapsto \de$ can be inverted. 

Turning to the proof of the proposition, we assume that $\det v > \ep$ and we find that 
\begin{align*}
| \det v | & \ge | \det \tilde v | - | \det v - \det \tilde v |
> \ep - \frac{\ep}{2} =  \frac{\ep}{2} > 0 
\end{align*}
whenever
\[
|v_j - \tilde v_j | < \ep'(\de = \ep/2)
\]
for $j = 1, \dots, d$, as required. 
\end{proof}

%%%%%%%%%%%%%%%%%%%%
\section{Basis for receding $Z$}%%%%%%
\label{tbasis}%%%%%%%%%%%%%%
%%%%%%%%%%%%%%%%%%%%%

We construct an algorithm which takes as input a prime $q_s$
and a natural number $n$, and outputs two primes $q_s \le q_M<p$ and a doubly indexed family
\[
\{ a_{i,j} \}_{\underset{ 1 \le j \le d_i}{2 \le i \le n}}.
\]
For $i$ odd, we will set $a_{i,1} = \vec{-1_1}$, so that
\[
\Liu_i(a_{i,1}) = \zeu(i).
\]
The remaining elements $a_{i,j}$ are $Z_{>q_M}$-points of $X$. We name this algorithm \emph{basis for receding $Z$}. We first announce the meaning of the output in a proposition. 
If $A$ is a graded algebra, we let $A_{[\le n]}$ denote the subalgebra generated by elements of graded degrees $\le n$. If $A$ is a polynomial algebra, we refer to a set of free generators as an \emph{algebra basis}.

\begin{sprop}
Suppose the algorithm \emph{basis for receding $Z$} (mentioned above and constructed in segments \ref{half}--\ref{basisalg} below) halts on the input $(q_s, n)$ and, upon halting, outputs the data 
\[
\left( q_M, p, 
\{ a_{i,j} \}_{\underset{ 1 \le j \le d_i}{2 \le i \le n}}
\right).
\]
Then the unipotent logarithms
$
\logu q
$
for $q$ prime $\le q_M$, together with the unipotent polylogarithms and zeta elements 
\[
\Liu_i(a_{i,j})
\]
for $2 \le i \le n$ and $1 \le j \le d_i$, form an algebra basis for $A^G_{[\le n]}(Z_{>q_M})$. In particular, the integral depth-one conjecture (\ref{depth}) holds for $q_s$ in half-weights $\le n$. Additionally, the $p$-adic zeta values $\ze^p(m)$ for $m$ odd $\in [3, n]$ are nonzero.
\end{sprop}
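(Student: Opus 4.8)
The plan is to carry out the proof together with the construction of the algorithm \emph{basis for receding $Z$} given in segments \ref{half}--\ref{basisalg} below: before it can halt the algorithm performs a chain of verifications, and the assertion is exactly that these verifications force the three conclusions. The organising tool is Proposition \ref{anyset}: to know that the listed elements form an algebra basis of $A^G_{[\le n]}(Z_{>q_M})$ it suffices to know (i) that they lie in $A^G(Z_{>q_M})$, and (ii) that their images form a basis of the Lie coalgebra $L^G(Z_{>q_M})$ in half-weights $\le n$ (applying Proposition \ref{anyset} in its evident truncation to half-weights $\le n$). Point (i) is automatic and available before any computation: $\Liu_i(a_{i,j}) \in A^G(Z_{>q_M})$ by Proposition \ref{shmats2}, while for a prime $q \le q_M$ the element $\logu q$ lies in $\Ext^1_{Z_{>q_M}}(\QQ(0),\QQ(1)) = E_1(Z_{>q_M}) = E_1^G(Z_{>q_M}) \subseteq A^G_1(Z_{>q_M})$, using that $q$ is a unit on $Z_{>q_M}$ and that $E^G = E$ by Remark \ref{bat}. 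So the real content is (ii), which the algorithm establishes inductively on the half-weight $i = 1, 2, \dots, n$.

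For the inductive step suppose a basis of $L^G_{<i}(Z_{>q_M})$ has been produced and recorded. In half-weight $i$ the candidate set has exactly $d_i = \dim_\QQ L^G_i$ members — the $\logu q$ when $i = 1$, and the $\Liu_i(a_{i,j})$ for $1 \le j \le d_i$ when $i \ge 2$, where $a_{i,1} = \vec{-1_1}$ and $\Liu_i(a_{i,1}) = \zeu(i)$ when $i$ is odd — so it is enough to certify that their images in $L^G_i$ are linearly independent. This is read off in two complementary pieces, paralleling the exact sequence $(*^\lor)$ of \S\ref{mtm} one level down: the coproduct induces a cobracket $\delta$ on $L^G$, whose kernel in half-weight $i$ is the extension space $E^G_i$ ($= 0$ for $i$ even $\ge 2$, spanned by the $\logu q$ for $i = 1$, and equal to $\QQ\,\zeu(i)$ for $i$ odd $\ge 3$); on $E^G_i$ one reads linear independence off the $p$-adic regulator, and away from $E^G_i$ one reads it off $\delta$, which Corollary \ref{d} and Proposition \ref{ceq} render completely explicit in terms of the recorded lower half-weight data $\logu(a_{i,j})$, $\Liu_{<i}(a_{i,j})$ — indeed of this data only the $L^G_1 \otimes L^G_{i-1}$ component occurs. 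A single rank (minor) computation on the combined data certifies the basis property. Since the computation is numerical and $p$-adic, what is literally verified is $\ep$-linear independence of this data in the sense defined above; the proposition above then upgrades it to genuine linear independence once the $p$-adic precision is high enough, the precision being tracked by the bookkeeping of \cite{mtmueii}. Given (i) and (ii), Proposition \ref{anyset} yields the algebra basis, and an algebra basis in half-weights $\le n$ in particular generates $A^G_{[\le n]}(Z_{>q_M})$, which is precisely the assertion of Conjecture \ref{depth} for $q_s$ in half-weights $\le n$.

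The nonvanishing of the $p$-adic zeta values is then a byproduct, not an input. For $i$ odd in $[3, n]$ the element $\zeu(i)$ spans the one-dimensional space $E^G_i = \ker(\delta|_{L^G_i})$, so the only route by which the algorithm can certify that $\zeu(i) = \Liu_i(a_{i,1})$ is a genuine basis element of $L^G_i$ — rather than an element killed in $L^G$ — is to compute its $p$-adic regulator $\reg(\zeu(i)) = \ze^p(i)$ to enough precision to see that it is bounded away from $0$; so halting forces $\ze^p(i) = \per_p(\zeu(i)) \neq 0$.

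\textbf{Main obstacle.} The technical heart is the inductive half-weight-$i$ step: arranging the data so that one rank computation — built from the explicit cobracket of Corollary \ref{d} and Proposition \ref{ceq}, the $p$-adic regulator on $E^G_i$, and the previously fixed basis of $L^G_{<i}$ — certifies that the $d_i$ candidates form a basis of $L^G_i$, while at the same time propagating $p$-adic error estimates so that $\ep$-linear independence of the numerical data implies honest linear independence of the motivic elements. This is the step that carries the ``double-tilde'' precision analysis of \cite{mtmueii}; in keeping with the informal style of this paper we streamline it here and defer the systematic precision accounting to the promised computational sequel.
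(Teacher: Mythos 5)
Your proposal is correct and follows essentially the same route as the paper, which (as announced in Remark \ref{pence}) treats the proposition as proved by the construction of the algorithm itself: Proposition \ref{anyset} reduces everything to exhibiting a basis of $L^G$ in half-weights $\le n$, the inductive half-weight step certifies this via $\ep$-linear independence of reduced coproducts modulo decomposables (ignoring $\zeu(m)$ for $m$ odd, which spans the one-dimensional kernel $E_m$ of $\Delta'$), and the nonvanishing of $\ze^p(m)$ is forced by the explicit check $|\ze^\ep(m)|>\ep$ built into the halting condition. The only slight inaccuracy is your claim that the regulator is the \emph{only} route to certifying $\zeu(i)$ as a basis element — since $E_i$ is one-dimensional by Borel this is automatic, and the nonvanishing check is instead needed to divide by $\ze^\ep(m)$ in the expansion subalgorithm and is imposed as a separate halting requirement — but this does not affect the conclusion.
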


\segment{half}{Subalgorithm}%%%%%%
We begin by constructing a subalgorithm that will be applied recursively within the main algorithm. The input consists of 
\begin{itemize}
\item
two primes $q_M<p$,
\item
an $\ep \in p^{-\NN}$,
\item
a height-bound $b\in \NN$; we denote by $X(Z_{> q_M})_b \subset X(Z_{> q_M})$ the set of elements of height $\le b$,
\item
a family of elements $a_{i,j} \in X(Z_{> q_M})_b$ such that the associated family of motivic polylogarithms 
\[
 L_{i,j} = \Liu_i(a_{i,j})
 \]
forms an algebra basis
\[
\Bb^G =
\{ 
L_{i,j} 
 \}
 _{i \le n}
\]
 for $A^G_{[\le n]}(Z_{>q_M})$; we set $L_{1,j} = \logu(q_j)$,  and $L_{i,1} = \zeu(i)$ for $i$ odd $\ge 3$.
\end{itemize}
The output is a function (in the form of a list or a ``dictionary'')
 which assigns to any $a \in X(Z_{>q_M})_b$, and any $m \le n$, an expansion of $\Liu_m(a)$ in the monomial vector-space basis associated to the algebra basis $\Bb^G$ to precision $\ep$. The construction is recursive in $m$.
 
 \ssegment{cobalt}{}
In the presence of the basis $\Bb^G$, the $\QQ$-algebra $A^G_{[\le n]}(Z_{>q_M})$ may be identified as a vector space with a space of vectors with entries in the field $\QQ$ of rational numbers equipped with the $p$-adic absolute value, and the computations that follow are carried out there. As a matter of notation, we let 
\[
\Aa = \{\Aa_{m,j} \}
\]
denote the monomial vector space basis associated to the algebra basis $\Bb^G$, numbered so that
\[
\Aa_{m,j} = L_{m,j}
\]
for $j = 1, \dots, \dim \nN^G_{-m}$, and $\Aa_{m,j}$ is a shuffle monomial in $L_{m', j'}$ with $m' < m$ for $j > \dim \nN^G_{-m}$. For the base case of our recursive construction, we have
\[
\Liu_1(a) = -\logu(1-a) \in 
A^G_1 \cong \QQ^{\{ q_1, \dots, q_M \}}
, 
\]
which we may expand in the logarithms $\logu q_j$ by decomposing $1-a$ as a product of primes.

\ssegment{stove}{}%%%%%%%
 Assume we've expanded the polylogarithmic values $\Liu_{<m}(a)$ in our basis for $A^G_{[<m]}(Z_{>q_M})$ up to precision $\ep$. Our basis gives us in particular a basis
 \[
 \Aa' = \Aa_1 \times \Aa_{m-1} \cup 
 \Aa_2 \times \Aa_{m-1} \cup 
 \cdots
 \cup \Aa_{m-1} \times \Aa_1
 \]
  for the direct sum of tensor products 
\[
A^G_1 \otimes A^G_{m-1} + 
A^G_2 \otimes A^G_{m-2} + \cdots
+ A^G_{m-1} \otimes A^G_1,
\]
and allows us to identify the latter with a space $\QQ^N$ of vectors.

Assume $m$ odd (the case $m$ even is simpler). Recall from \ref{mtmoverview}($*^\lor$) that in this case, the reduced coproduct $\Delta'$ on $A^G_m$ is injective modulo the motivic zeta value $\zeu(m)$. Because of our imperfect approximations, $\Delta' \Liu_m(a)$ may not quite be in the linear span of the images $\Delta' \Aa_{m,j}$ of the basis elements $\Aa_{m,j}\in A^G_m$. We may nevertheless \emph{project} $\Delta' \Liu_m(a)$ onto the subspace spanned by the $\Delta' \Aa_{m,j}$ (relative to the basis $\Aa'$) and compute the coefficients:
\[
\Delta' \Liu_m(a) = \sum_{j=2}^{\dim A^G_m} c_{j} 
\Delta' \Aa_{m,j}.
\]
To do so, we expand each vector $\Delta' \Aa_{m,j}$ in the basis $\Aa'$ to precision $\ep$, and we expand the new vector $\Delta' \Liu_m(a)$ in the basis $\Aa'$ to precision $\ep$ as well. We then set
\[
c_j := 
\langle 
\Delta' \Liu_m(a), 
\Delta' \Aa_{m,j} 
\rangle
\]
where the inner product is the standard inner product on $\QQ^N$.

This gives us all coefficients except for the coefficient $c_1$ of $\zeu(m)$. To determine the latter, we use the period map as follows. Letting $\Li_m^\ep(a)$, $\Aa^\ep_{m,j}$ denote $\ep$-approximations of the $p$-adic periods of $\Liu_m(a)$, $\Aa_{m,j}$ produced by the algorithm of Besser--de Jeu \cite{Lip}, we set 
\[
c_1 := \frac
{ \Li^\ep_m(a) - \sum_{j=2}^{\dim A^G_m} c_{j} \Aa^\ep_{m,j}}
{\ze^\ep(m)}
\]
(decreasing $\ep$ if needed so as to achieve $|\ze^\ep(m)| > \ep$). We then have the expansion
\[
\Liu_m(a) \underset \ep \sim \sum c_j \Aa_{m,j}
\]
we hoped for. This completes the construction of the subalgorithm. 

\segment{basisalg}{Main algorithm}%%%
We now construct the main algorithm of this section. Recall that we are given as input a prime $q_s$ and a natural number $n$. Our primary goal is to construct a basis for $A^G_{[\le n]}(Z_{>q_M})$ using $p$-adic approximations, where $q_M$ is a prime  $ \ge q_s$ and $p$ is a prime $> q_M$. Along the way we will search for potential basis elements among the points 
\[
X(Z_{>q_M})_b \subset X(Z_{>q_M})
\]
of height $\le b$. Since there may not be enough of these points, we will enthusiastically increase $b$ while reluctantly considering the possibility of increasing $q_M$, and hence $p$. Our secondary goal is to ensure that our $p$-adic approximations are good enough to witness the nonvanishing (known for $p$ regular, conjectured in general) of the $p$-adic zeta values $\ze^p(m)$ for $m$ odd $\in [3,n]$.

Let $I$ denote the set of quadruples $(q_M, p, b, \ep)$ with $p > q_M \ge q_s$ primes, $b \in \NN$, and $\ep \in p^{-\NN}$. Let $ J \subset I$ be a subset with the following properties. (1) The projection of $J$ onto the coordinate plane $(b, \ep)$ defines $\ep$ as a decreasing function of $b$. (2) There exists an increasing function $b \mapsto p_b$ from the natural numbers to the set of primes, such that for fixed $(b,\ep)$, the fiber of $J$ above $(b,\ep)$ is equal to the set of all pairs of primes $q_M < p \le p_b$ in which $p$ is the \textit{next} prime after $q_M$. We \emph{arbitrarily} impose an ordering on the set $J$.

\ssegment{basrk2}{Remark}%%%%
Psychologically, we may imagine $q_M$, $p$ and $b$ to be \emph{increasing} while $\ep$ \emph{decreases}. However, it is important that after decreasing $\ep$, we also \emph{decrease} $q_M$ and $p$. Thus, as the algorithm proceeds, we occasionally revisit past primes in order to give them a second chance. 

\ssegment{basrk3}{Remark}%%%%
Given $(q_M, p, b, \ep) \in J$ then, the algorithm attempts to verify the nonvanishing of the $p$-adic zeta values and to build a basis for
\[
A^G_{[\le n]}(Z_{>q_M})
\]
using the points of $X(Z_{> q_M})_b$ and using $p$-adic approximations of precision $\ep$. This may fail for several reasons. One reason is that an $\ep$-approximation of one of the $p$-adic zeta values $\ze^p(m)$ may equal $0$. A second reason is that, having potentially succeeded in constructing a partial basis, there may not be another linearly independent polylogarithm available among the points of $X(Z_{>q_M})_b$ and this may be because $b$ is too small or because the entire set of $Z_{>q_M}$-points $X(Z_{>q_M})$ is too small. Finally, even if an appropriate choice of next basis element \emph{can} be found inside $X(Z_{>q_M})_b$, our $\ep$-approximations may be too coarse to see the linear independence. 

\ssegment{besisalgrec}{}%%%%
We now assume, in preparation for the recursive step, that we've reached a data point $(q_M,p,b,\ep) \in J$. Assume further that we have constructed an algebra basis for $A^G_{[< m]}(Z_{>q_M})$ for some $m \le n$ consisting of motivic logarithms, motivic zeta values, and motivic polylogarithms $\Liu_i(a_{i,j})$ with $a_{i,j} \in X(Z_{>q_M})_b$. Assume further that we've constructed a partial algebra basis in weight $m$ given by unipotent $m$-logarithms
\[
L_1 = \Liu_m(a_{m,1}), \dots, L_r= \Liu_m(a_{m,r})
\]
($a_{m,j} \in X(Z_{>q_M})_b$)
which are linearly independent modulo the space $D_n^G$ of decomposables.  We choose arbitrarily a point $a \in X(Z_{>q_M})_b$ and an $\ep \in p^{-\NN}$ and consider adding $L := \Liu_m(a)$ to our basis. We use the subalgorithm of segment \ref{half} above to expand $\Delta'$ of the  decomposables in weight $m$, the $\Delta'L_i$, as well as our new candidate $\Delta'L = \Delta'\Liu_m(a)$ with $p$-adic precision $\ep$ in our polylogarithmic basis
\[
\bigcup_{i+j = m, \; i,j \ge 1} \Aa_i \times \Aa_j
\]
 for the space
\[
\bigoplus_{i+j = m, \; i,j \ge 1} A^G_i\otimes A^G_j
\]
and check the result for $\ep$-linear independence (ignoring $L_1$ if $m$ is odd). 
\begin{itemize}
\item
If the result is negative, we go on to the next quadruple $(q_M,p,b,\ep) \in J$. If $p$ has changed, we  verify the nonvanishing of an $\ep$-approximation $\ze^\ep(m)$ of $\ze^p(m)$ for $m$ odd $\in [3,n]$, decreasing $\ep$ as needed.
\item
 If the result is positive, we set $a_{m,r+1}$ equal to $a$ and continue the process. 
  \end{itemize}
 We halt when we reach $r = \dim \nN^G_m$ for each $m \le n$. This completes the construction.

\begin{sprop}
Assume $p$ regular (or nonvanishing of the $p$-adic zeta values $\ze^p(m)$ for $m$ odd $\ge 3$).
Assume the integral depth-1 conjecture (\ref{depth}) holds for $q_M$ in half weights $\le n$. Then the algorithm of segment \ref{basisalg} halts. 
\end{sprop}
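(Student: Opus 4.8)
The plan is to extract from Conjecture \ref{depth} a ``target'' configuration on which every internal test of the algorithm returns a positive answer, and then to argue that the prescribed traversal of $J$ is bound to reach a configuration at least as good (if it has not already halted). Fix the input $(q_s,n)$.

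\emph{Extracting a target.} Apply the integral depth-1 conjecture to $(q_s,n)$: there is a prime $Q\ge q_s$ such that $A^G_{[\le n]}(Z_{>Q})$ is generated as a $\QQ$-algebra by the logarithms $\logu(q)$ ($q$ prime $\le Q$), the zeta elements $\zeu(i)$ ($i$ odd, $3\le i\le n$) and the polylogarithms $\Liu_i(a)$ ($2\le i\le n$, $a\in X(Z_{>Q})$). By Proposition \ref{anyset} (applied to the truncation $A^G_{[\le n]}$) it suffices, in order to get an algebra basis, to prune this generating set to one mapping to a basis of $L^G$ in half-weights $\le n$. In half-weight $1$ the set $\{\logu(q):q\le Q\}$ is already such a basis of $L^G_1=A^G_1(Z_{>Q})$. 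In each odd half-weight $i\in[3,n]$ the line $E^G_i=\QQ\,\zeu(i)$ of degree-$i$ primitives is nonzero (it equals $\Ext^1_{Z_{>Q}}(\QQ(0),\QQ(i))$, by \S\ref{bat}) and maps injectively to the indecomposables $L^G_i$ (as always for the coordinate ring of a prounipotent group); since the degree-$i$ part of the generating set spans $L^G_i$, one completes $\{\zeu(i)\}$ to a basis of $L^G_i$ using the remaining, necessarily polylogarithmic, degree-$i$ generators. Do likewise, with no zeta element, in each even half-weight. This produces a finite family $\{a^*_{i,j}\}$ of $Z_{>Q}$-points of $X$ (together with $a^*_{i,1}=\vec{-1_1}$ for $i$ odd) such that the $\logu(q)$, the $\zeu(i)$ and the $\Liu_i(a^*_{i,j})$ form an algebra basis of $A^G_{[\le n]}(Z_{>Q})$ of exactly the shape demanded in segments \ref{half} and \ref{besisalgrec}. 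Let $b^*$ bound the heights of the $a^*_{i,j}$ and let $p^*$ be the prime following $Q$.

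\emph{Choosing the precision.} At each half-weight $m\le n$ and each stage of the recursion in \ref{besisalgrec}, the exact vectors obtained by applying $\Delta'$ to the decomposables and to $\Liu_m(a^*_{m,2}),\dots,\Liu_m(a^*_{m,j})$ and expanding in the monomial basis $\Aa'$ of $\bigoplus_{i+i'=m}A^G_i\otimes A^G_{i'}$ are linearly independent --- this is precisely the statement that our target is a basis, together with the fact, recalled in \ref{besisalgrec}, that $\Delta'$ is injective on $A^G_m$ modulo $\zeu(m)$. There are only finitely many such stages, so the $\epsilon$-linear-independence proposition proved above yields one $\epsilon^*\in(p^*)^{-\NN}$ so fine that, at every stage, any family of vectors $p^*$-adically within $\epsilon^*$ of these exact ones is again linearly independent and, shrinking $\epsilon^*$ a little more, $\epsilon^*$-linearly independent, so that the algorithm's test returns ``positive''. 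Shrinking once more we may also assume $|\ze^{\epsilon^*}(m)|>\epsilon^*$ for every odd $m\in[3,n]$, which is possible because $\ze^{p^*}(m)\ne 0$ by hypothesis. With precision $\le\epsilon^*$ the base case $\Liu_1(a)=-\logu(1-a)$ and every step of the subalgorithm of \ref{half} go through on the family $\{a^*_{i,j}\}$, and every linear-independence check of \ref{besisalgrec} on that family succeeds.

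\emph{Reaching the target.} Because $b\mapsto p_b$ is increasing and $\epsilon$ is a decreasing function of $b$ (tending to $0$) on the projection of $J$, for all sufficiently large $b$ one has $b\ge b^*$, $\epsilon(b)\le\epsilon^*$ and, since $p^*$ is the prime following $Q$ and $p_b\ge p^*$, the quadruple $(Q,p^*,b,\epsilon(b))$ lies in $J$. By Remark \ref{basrk2} the algorithm returns to $Q$ with ever-finer precision and ever-larger height bound, so it eventually visits such a quadruple unless it has already halted; organizing the ``arbitrary'' choices of candidate points by a dovetailing over the countably many finite sequences of bounded-height points, the sequence $(a^*_{i,j})$ is eventually the one attempted there. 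At that point the preliminary $p$-adic zeta check passes and the recursion of \ref{besisalgrec} successfully adjoins $a^*_{m,j}$ at every step, so it attains $r=\dim\nN^G_m$ in every half-weight $m\le n$ and halts. As any earlier halt also terminates the algorithm, this proves termination. The one delicate point is this last one: the interplay between the prescribed, small-prime-revisiting traversal of $J$ and the unspecified ``arbitrary'' selection of candidate points, which must be organized fairly for the argument to close; everything else is the bookkeeping of the first two steps, which is formal given Proposition \ref{anyset} and the $\epsilon$-linear-independence proposition.
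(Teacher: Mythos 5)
Your proof is correct, but there is nothing in the paper to compare it against: the proposition is stated at the end of \S\ref{tbasis} with no proof, in keeping with the authors' announced convention (after Theorem \ref{pointcountingthm}) that a halting statement is ``considered proved as soon as the algorithm has been constructed.'' What you have written is the argument the paper leaves implicit, and you have assembled it from exactly the right ingredients: Conjecture \ref{depth} plus Proposition \ref{anyset} (and the injectivity of primitives into indecomposables in a connected graded Hopf algebra over $\QQ$) to produce a finite target basis of bounded height $b^*$ over some $Z_{>Q}$; the $\ep$-linear-independence proposition of \S3 to convert the genuine linear independence of the finitely many target vectors into a single sufficient precision $\ep^*$; the zeta-nonvanishing hypothesis to keep the division by $\ze^\ep(m)$ in \ref{stove} legitimate; and the structure of $J$ to guarantee the traversal reaches a quadruple $(Q,p^*,b,\ep)$ with $b\ge b^*$ and $\ep\le\ep^*$. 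You are also right to flag, as the genuinely delicate point, that the paper's specification of the algorithm is too loose for the halting argument to close as written: condition (1) on $J$ does not literally say $\ep\to 0$ as $b\to\infty$, and the instruction to ``choose arbitrarily a point $a\in X(Z_{>q_M})_b$'' permits an adversarial selection that never tries the target points. Your fixes (reading $\ep(b)\to 0$ into the intended meaning of $J$, and dovetailing the candidate choices --- or, more simply, exhausting the finite set $X(Z_{>q_M})_b$ at each quadruple before advancing) are the minimal repairs needed, and the proof is complete once they are adopted as part of the algorithm's specification.
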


%%%%%%%%%%%%%%%%%%%
\section{Change of basis}%%%%%%%
\label{cob}%%%%%%%%%
%%%%%%%%%%%%%%%%%%

\segment{freegen}{}%%%%%%%%
We continue to work with the scheme $Z_{>q_M}$ produced in \S\ref{tbasis} and we drop the repeated argument `$(Z_{>q_M})$' throughout this section. Recall that $A_{[\le n]} \subset A$ denotes the subalgebra generated by elements of half-weight $\le n$, and similarly for $A_{[\le n]}^G$. Given our polylogarithmic algebra-basis
\[
\Bb^G_{\le n} = \bigcup_{i = 1}^{n} \Bb_i^G
\]
 for $A^G_{[\le n]}$, there exists an algebra basis
$
\Bb_{\le n} = \bigcup_{i = 1}^{n} \Bb_i
$
for $A_{[\le n]}$ which extends $\Bb^G_{\le n}$, as well as associated free generators
\[
\tau_1, \dots, \tau_M, \; \si_3, \si_5, \si_7, \dots
\]
of the Lie algebra $\nN = \nN(Z_{> q_M})$ (in half-weights bounded below by $-n$), where we identify the latter with the set of Lie-like elements in the completed universal enveloping algebra (c.f. remark \ref{bat} above). We detail the construction of this algebra basis, in notation chosen to accord with \cite[\S3.1]{mtmueii}. Let $\Pp_{\le n}^G \subset \Bb_{\le n}^G$ denote the subset obtained by removing the extension classes (i.e. the logarithms and zeta elements). In half-weight $1$ we set
\[
\Pp_1 = \Pp_1^G = \emptyset.
\]
In half-weight $i \ge 2$ we extend the set
$
\Pp_i^G
$
(by choosing arbitrary linearly independent elements of $A_i \subset A$)
 to a linearly independent subset $\Pp_i$ of $A_i$ which spans a linear complement to the subspace
\[
E_i \oplus D_i \subset A_i
\]
spanned by extensions and decomposables. Set
\begin{align*}
\Ee_i := 
\left\{
\begin{array}{ll}
\{\logu q_1, \dots, \logu q_M \} 
& i = 1
\\
\zeu(i) & i >1 \mbox{ odd}
\\
\emptyset & i >1 \mbox{ even},
\end{array}
\right.
&&
\Ee_{\le n} := \bigcup_{i = 1}^n \Ee_i,
&&
\Pp_{\le n} := \bigcup_{i=1}^n \Pp_i,
\end{align*}
and
\[
\Bb_{\le n} := \Ee_{\le n} \cup \Pp_{\le n}.
\]
Let $\tau_i$ be the element of $\Uu_{-1}$ (half-weight $-1$ part of the completed universal enveloping algebra) dual to $\logu q_i$ relative to the basis 
\[
\Bb_1 = \{\logu q_1, \dots, \logu q_M \}
\]
of $A_1$. For $q = q_i$ a prime $\le q_M$, we sometimes write $\tau_q$ in place of $\tau_i$.

Let $\Aa_r$ denote the set of monomials of half-weight $r$ in the set $\Bb_{\le n} \subset A$. For $r$ odd ($3\le r \le n$) let $\si_r$ be the element of $\Uu_{-r}$ dual to $\zeu(r)$ relative to the vector space basis $\Aa_r$ of $A_r$. Then according to propositions 3.2.2 and 3.2.3 of \cite{mtmueii},
\[
A_{[\le n]} = \QQ[ \Bb_{\le n}].
\]
as $\QQ$-algebras, and the $\tau_q$, $\si_r$ form free generators as hoped.\footnote{
The resulting algebra basis $\Bb_{\le n}$ is a mixture of concrete polylogarithmic elements of $A^G$ which we have constructed algorithmically on the one hand, with abstract elements of $A$ on the other hand, whose construction does not intervene in the algorithm. If we were to separate our construction of the algorithm from our verification that its output has the desired meaning, then these last elements would serve as a mere book-keeping device in the construction.
}

\segment{}{}%%%%
If $w$ is a word in the generators $\tau_q$, $\si_r$, and $\Aa$ is an element of the vector-space basis for $A^G$ generated by the polylogarithmic algebra basis (i.e. a monomial in $\Bb^G_{\le n}$), then the value
\[
\langle \Aa, w \rangle \in \QQ
\]
is independent of the choice of basis for $A$ beyond the polylogarithmic basis constructed for $A^G$.

We now construct an algorithm which takes as input a polylogarithmic basis, a word $w$ in the generators $\tau_q$, $\si_r$, an element $\Aa$ of the vector space basis generated by the algebra basis, and an $\ep$, and computes $\langle \Aa, w \rangle$ to precision $\ep$. 
The construction is a d\'evissage in three steps. An example is worked out for instance in \S7.6.3 of \cite{mtmue} as well as in \cite{CorwinDCI}.

\ssegment{}{}%%%%%
If $\Aa = \Aa' \Aa''$ is a product of two or more algebra-basis elements, we use the relationship  
\[
\langle \Aa' \Aa'', w \rangle 
 = \langle \Aa' \otimes \Aa'', \mu (w) \rangle
\]
between the shuffle product on $A$ and the coproduct $\mu$ on its completed universal enveloping algebra $\Uu$ repeatedly to  reduce to the case that $\Aa = L$ is itself an algebra-basis element. 

\ssegment{}{}%%%%%%
The values $\langle L, w \rangle$ for $L$ a logarithm, an $n$-logarithm, or a zeta value obey the following rules. We have
\[
\langle \logu a, \tau_q \rangle = v_q(a)
\]
(the $q$-adic valuation of $a$)
and all other values 
$\langle \logu a, w\rangle $ vanish. We have
\[
\langle \Liu_1 a, \tau_q \rangle = 
\langle -\logu(1-a), \tau_q \rangle = -v_q(1-a).
\]
By proposition \ref{ceq}, we have 
\[
\langle \Liu_n a, \tau_1\cdots \tau_n \rangle = 
\langle \Liu_1 a, \tau_1 \rangle
\langle\logu a, \tau_2 \rangle
 \cdots 
 \langle \logu a, \tau_n \rangle,
\]
and
\[
\langle \Liu_n a, \si_r \tau_1 \cdots \tau_s \rangle
 =
\langle \Liu_r a, \si_r \rangle 
\langle \logu a, \tau_1 \rangle
\cdots 
\langle \logu a, \tau_s \rangle
\]
($r+s = n$), and all other values $\langle \Liu_n a, w \rangle$  vanish. Finally, by definition 
\[
\langle \zeu(n) , \si_n \rangle = 1
\]
and all other values $\langle \zeu(n), w \rangle$ vanish. Using these formulas, we reduce to the computation of the values $\langle \Liu_r (a), \si_r \rangle$, noting, however, that $\Liu_r(a)$ may not be an algebra basis element. 

\ssegment{}{}%%%%
We use the method of \S\ref{half} to expand $\Liu_r (a)$ in our polylogarithmic basis in half-weight $r$ to precision $\ep$. We have thus reduced to the case that $\Aa = L$ is again an algebra-basis element, while $w = \si_r$ is a one-letter word. Finally, by our very definition of $\si_r$, we have 
\[
\langle L, \si_r \rangle =
\left\{
\begin{array}{ll}
    1    \mbox{ if }  L = \zeu(r),  \mbox{ and}  \\
    0  \mbox{ otherwise.}
\end{array}
\right.
\]
This completes the construction of the algorithm.

\segment{}{Remark}%%%
Given $w \in \Uu_{-k}$ a word in the generators $\tau_p$, $\si_r$ (with $k \le n$), we let $f_w \in A_k$ denote the dual element relative to the basis consisting of such words. In terms of the resulting shuffle basis, the above computations can be rewritten as follows:
\[
\logu q = f_{\tau_q},
\]
\[
\zeu(n) = f_{\si_n},
\]
and
\[
\Liu_i(a) = 
\sum 
\langle \Liu_r a , \si_r \rangle
 v_{q_1}(a)\cdots v_{q_s}(a) 
f_{\si\tau_{q_1}\cdots \tau_{q_s}} 
+
\sum
 v_{q_0}(a)\cdots v_{q_i}(a) 
f_{\tau_{q_0}\cdots \tau_{q_i}} 
\]
($a = a_{i,j}$, $r+s = i$). 

%%%%%%%%%%%%%%%
\section{Basis for $Z \subset \Spec \ZZ$ arbitrary}%%%%%%
\label{arb}%%%%%%%%
%%%%%%%%%%%%%%%%%

\segment{}{}%%%%%%
We now consider $Z \subset Z_{>q_M}$ arbitrary. At this point we have a polylogarithmic basis $\{\Aa_{i,j}\}$ for $A^G_{[\le n]}(Z_{>q_M})$, a shuffle basis $\{f_w\}$ for all of $A(Z_{>q_M})$ (both in bounded weights $\le n$), and a matrix $M$ expanding the former in the latter to precision $\ep$, which we think of as the matrix associated to the inclusion
\[
A_{\le n}^G(Z_{>q_M}) \subset A_{\le n}(Z_{>q_M})
\]
(where the subscript $\le n$ refers to the vector subspace of elements in graded degrees $\le n$) relative to the polylogarithmic basis on the source and the shuffle basis on the target. Relative to the shuffle basis, $A_{\le n}(Z) \subset A_{\le n}(Z_{> q_M})$ is the hyperplane spanned by $f_w$ with $w$ not involving the generators $\tau_q$ for primes $q \in Z$. Pulling back via $M$, we obtain a system of linear equations. We may then construct a basis for the space of solutions by basic methods of linear algebra. The result is a vector space basis $\Aa^G$ of $A_{\le n}^G(Z)$.

%%%%%%%%%%%%%%%%%%%%%
\section{Geometric algorithm}%%%%%%%%%
\label{geomalg}%%%%%%%%%
%%%%%%%%%%%%%%

\segment{geom1}{}%%%%%
The \emph{geometric algorithm} takes as input a finite set
\[
\Si_{-1} = \{\tau_1, \dots, \tau_s\}
\]
and a natural number $n$, and outputs a finite set
\[
\{F_1^a, \dots, F_N^a\}
\]
 of elements of the polynomial algebra 
\[
\QQ[\{f_\la \}_\la, \logu, \Liu_1, \Liu_2, \dots, \Liu_n]
\]
where $\la$ ranges over the set of Lyndon words in the (suitably ordered) set
\[
\Si = \bigcup_{i = 1}^n \Si_{-i}
\] 
where $\Si_{-i}$ contains one element $\si_i$ for $i$ odd $\ge 3$ and no elements for $i$ even. (The superscript `$a$' stands for \emph{abstract}.)

\segment{this1}{}%%%%%
This algorithm is independent of the previous algorithms, and its halting is unconditional. We first explain the meaning of its output. Let $\pi^\un(\Si)$ be the free prounipotent group on the set $\Si$ with $\Gm$-action induced by placing $\Si_i$ in graded degree $i$. Let $\nN = \nN(\Si)$ be its Lie algebra --- the free pronilpotent Lie algebra on the set $\Si$. Let $\nN^G(\Si)$ be the quotient
\[
\nN^G(\Si) := \nN / [ \nN_{\le -2}, \nN_{\le -2}],
\]
let $\pi^G(\Si)$ be the associated quotient of $\pi^\un(\Si)$ and let $A^G(\Si) = \Oo(\pi^G(\Si))$ be the associated Hopf algebra. There is a canonical isomorphism
\[
A(\Si) := \Oo(\pi_1^\un(\Si)) = \QQ[\{f_\la\}_\la], 
\]
hence an inclusion 
\[
A^G(\Si) \subset A(\Si) = \QQ[\{f_\la\}_\la].
\]
Let $\nN^\PL_{\ge -n}$ be the graded Lie algebra
\[
\nN^\PL_{\ge -n} = \QQ(1) \ltimes \prod_{i=1}^n \QQ(i)
\]
(with $\QQ(i)$ placed in graded degree $-i$) and let $\pi^\PL_{\ge -n}$ denote the associated unipotent $\QQ$-group. Let
\begin{align*}
\logu = f_0,
&&
\Liu_1 = f_1,
&&
\Liu_2 = f_{10}, 
&&
\dots
&&
\Liu_n = f_{10 \cdots 0}
\end{align*}
denote shuffle coordinates on $\pi^\PL_{\ge -n}$ associated to its presentation as a quotient 
\[
\pi^\un(e_0, e_1) \surj \pi^\PL_{\ge -n}
\]
of the free prounipotent group on two generators $e_0$, $e_1$. Endow $\pi^\PL_{\ge -n}$ with the trivial $\pi^G(\Si)$-action. Let $\ev^G_n(\Si)$ denote the evaluation map
\[
\pi^G(\Si) \times Z^1(\pi^G(\Si), \pi^\PL_{\ge -n})^\Gm
\to
\pi^G(\Si) \times \pi^\PL_{\ge -n}.
\]
\[
\ev^G_n(\Si)(\ga, c) = (\ga, c(\ga)).
\]
Then the functions $F^a_1, \dots, F^a_N$ produced by the algorithm are contained in
\[
A^G(\Si)[\logu, \Liu_1, \dots, \Liu_n]
\]
where they generate the ideal associated to the scheme theoretic image of $\ev^G_n(\Si)$.

\segment{place1}{}%%%%
We now construct the algorithm. As a corollary of proposition \ref{ceq} above, we find in \cite[Corollary 3.11]{CorwinDCI} that the \emph{full} cocycle evaluation map
\[
\ev_n(\Si): \pi^\un(\Si) \times
Z^1
\big(\pi^\un(\Si), \pi^\PL_{\ge -n} \big)^\Gm 
 \to
 \pi^\un(\Si) \times  \pi^\PL_{\ge -n} 
\]
(i.e. without passing to the Goncharov quotient) is given in coordinates by the map of finite type polynomial algebras over $\QQ$
\[
\QQ[ \{f_\la \}_\la, \{\Phi^\rho_\Lambda\}_{\wt (\rho) = \wt (\Lambda) } ]
\from
\QQ[\{f_\la \}_\la, \logu, \Liu_1, \Liu_2, \dots, \Liu_n],
\]
(where $\la$ ranges over Lyndon words in the generators $\tau$, $\si$ of $\pi_1^\un(Z)$, $\Lambda$ ranges over the set of polylogarithmic words in $e_0$, $e_1$
 of weight $\le n$, and $\rho$ ranges over the set of generators $\tau$, $\si$ of $\pi^\un_1(Z)$) given by
\[
\sum_{\tau \in \Si_{-1}} f_\tau \Phi^\tau_0 
\mapsfrom
\logu
\]
and
\[
\sum
_{
\begin{matrix}
\tau_1, \dots, \tau_r \in \Si_{-1} \\
\si \in \Si_{-s} \\
r+s = n \\
1 \le s \le n
\end{matrix}
}
f_{\tau_1 \cdots \tau_r \si}
 \Phi^{\tau_1}_0 \cdots \Phi^{\tau_r}_0 
\Phi^\si_{\underbrace{0 \cdots 01}_{s}}
\mapsfrom
\Liu_n.
\]
Since 
\[
Z^1
\big(\pi^G(\Si), \pi^\PL \big)^\Gm
=
Z^1
\big(\pi^\un(\Si), \pi^\PL \big)^\Gm,
\]
the scheme theoretic image of $\ev_n^G(\Si)$ is the same as the scheme theoretic image of the composite 
\[
 \pi^\un(\Si) \times
Z^1
\big(\pi^\un(\Si), \pi^\PL_{\ge -n} \big)^\Gm 
 \to
 \pi^\un(\Si) \times  \pi^\PL_{\ge -n} 
 \to
\pi^G(\Si) \times  \pi^\PL_{\ge -n}.
\]
In terms of coordinate rings, this means restricting the map $\ev_n(\Si)^\sharp$ to the subalgebra
\[
 A^G(\Si)[ \logu, \Liu_1, \Liu_2, \dots, \Liu_n].
\]
By proposition \ref{anyset} (which, in turn, is a direct application of  \cite[3.1.1]{mtmueii}), a basis for the latter may be constructed out of the basis $\{f_\la\}$ for $\pi^\un(\Si)$ by elementary linear algebra. Subsequently, a set of generators for the kernel of
\[
\QQ[ \{f_\la \}_\la, \{\Phi^\rho_\Lambda\}_{\wt (\rho) = \wt (\Lambda) } ]
\from
A^G(\Si)[\logu, \Liu_1, \Liu_2, \dots, \Liu_n]
\]
may be constructed by standard methods of elimination theory. This completes our construction of the geometric algorithm.

\segment{phi}{Remark}%%%%%%%
If $\Cc$ is the universal cocycle
\[
\pi^\un_1(Z) \times
Z^1
\big(\pi_1^\un(Z), \pi^\PL(X)\big)^\Gm 
 \to
   \pi^\PL(X) \times
Z^1
\big(\pi_1^\un(Z), \pi^\PL(X)\big)^\Gm,
\]
then 
\[
\Phi^\rho_{\underbrace{0 \cdots 01}_{n}} (\Cc)
=
\langle \Liu_n(\Cc), \rho \rangle
\]
(this equality takes place inside the coordinate ring of $Z^1$, or, in terms of our shuffle basis, inside $\QQ[\{\Phi^\rho_\Lambda\}]$). In other words, $\Phi^\rho_{0\dots01}$ corresponds to the function on cocycles $\langle \Liu_n(?), \rho \rangle$.

%%%%%%%%%%%%%%%%%
\section{Loci algorithm}%%%%%%%%
%%%%%%%%%%%%%

In terms of the basis algorithms, the change of basis algorithm, and the geometric algorithm, our \emph{loci algorithm} is similar to the algorithm of \cite[\S4.2]{mtmueii}; we repeat the construction, making adjustments as needed. 

\segment{}{}%%%%%
The loci algorithm takes as input an open subscheme $Z \subset \Spec \ZZ$, a natural number $n$ and an $\ep$. As output, it returns a prime $p \in Z$, a polylogarithmic algebra basis $\Bb^G_{\le n}$ of the polynomial ring $A^G_{[\le n]}(Z)$ and a family $\{F_i\}_i$ of elements of the polynomial ring
\[
\QQ[\Bb^G_{\le n}, \logu, \Liu_1, \dots, \Liu_n].
\]

\segment{}{}%%%%
Before constructing the algorithm, we explain the meaning of its output upon halting. There's an obvious homomorphism 
\[
\QQ[\Bb^G_{\le n}, \logu, \Liu_1, \dots, \Liu_n]
\to
\operatorname{Col}(X(\Zp))
\]
to the ring of Coleman functions. Let $F_i^p$ denote the image of $F_i$. We symmetrize the family $\{F_i^p\}_{i \in I}$ with respect to the $S_3$ action as indicated in segment \ref{sym} to obtain a bigger family $\{F_j^p\}_{j \in J}$. Then the family $\{F_j^p\}_{j \in J}$ is within $\ep$ of a set of generators for the ideal of $\operatorname{Col}(X(\Zp))$ which defines the (symmetrized) polylogarithmic Chabauty-Kim locus $X(\Zp)_n$.

\segment{localg}{}%%%%% 
The algorithm is constructed as follows. We run the \emph{geometric algorithm} (\S\ref{geomalg}) on the set of symbols
\[
\Si_{-1} = \{q_1, \dots, q_s\}
\]
with $s$ equal to the number of primes excluded from $Z$. This gives us a family $\{F^a_i\}$ of elements of the polynomial $\QQ$-algebra 
\[
\QQ[\{f_\la \}_\la, \logu, \Liu_1, \Liu_2, \dots, \Liu_n].
\]
Thus, the \emph{coefficients} of $F^a_i$ are elements of the vector space $\QQ[\{f_w\}]$  with basis indexed by words $w$ in the set
$\{
\tau_1, \dots, \tau_s, \; \si_{-3}, \si_{-5}, \dots
\}$.
(In fact, by their construction, the coefficients will belong to the subspace corresponding to the Goncharov quotient.)

We run the algorithm \emph{basis for receding $Z$} (\S \ref{tbasis}) on the input $(q_s, n)$ to obtain a pair of primes $q_s \le q_M <p$ and a polylogarithmic algebra basis
\[
\tag{*}
_{>q_M}\Bb^G_{\le n}
\]
(hence also an associated monomial vector space basis $_{>q_M}\Aa^G_{\le n}$) of
\[
A_{[\le n]}^G(Z_{>q_M}).
\]
We run the \emph{change of basis algorithm} (\S\ref{cob}) on the polylogarithmic algebra basis (*) and we run the \emph{basis algorithm for arbitrary $Z$} (\S\ref{arb}) on the further input $Z$. By elementary linear algebra, we obtain
\begin{enumerate}
\item
a polylogarithmic algebra basis $\Bb^G_{\le n}$ for $A^G_{[\le n]}(Z)$, and
\item
 for each element $\Aa$ of the associated vector space basis ${\Aa^G_{\le n}},$
an associated vector $M(\Aa)$ in the vector space $\QQ[\{f_w\}]$.
\end{enumerate}
We now expand the coefficients of each $F_i^a$ in the vectors $M(\Aa)$ to obtain a family $\{F_i\}$ of elements of 
the polynomial ring
\[
\QQ[\Bb^G_{\le n}, \logu, \Liu_1, \dots, \Liu_n]
\]
as hoped. This completes the construction of the algorithm.

%%%%%%%%%%%%%%%%%%%%
\section{Point counting algorithm}%%%%
\label{pcalg}%%%%%%%%%%%%%%%
%%%%%%%%%%%%%%%%%%%%

\segment{ab}{}%%%%
Our \emph{root criterion algorithm} from \cite[\S5]{mtmueii} combines standard methods of Newton polygons together with a growth estimate obtained by Besser--de Jeu \cite{Lip} to decide whether the number of zeroes of a $p$-adic power series in a given ball is zero or one, given a sufficiently close approximation. We do not repeat it here.

\segment{ac}{}%%%%%%
Our \emph{point-counting algorithm} from \cite[\S7]{mtmueii} remains unchanged; we nevertheless do repeat it for the reader's convenience, while avoiding the double tildes of loc. cit. as indicated in remark \ref{para}. This algorithm takes as input an open subscheme $Z$ of $\Spec \ZZ$ and proceeds by running two processes simultaneously. One process is simply a naive search for points of $X(Z)$. This produces a gradually increasing subset $X(Z)_n \subset X(Z)$.

The other process locates an appropriate prime $p \in Z$ and computes the loci $X(\Zp)_n$ to given precision. It then verifies if it is possible, with the given level of precision, to declare an equality 
\[
X(Z)_n = X(\Zp)_n.
\]

\segment{aa}{}%%%%%
We begin by running our basis algorithm. In addition to the polylogarithmic basis of $A^G_{[\le n]}(Z)$ which will remain fixed throughout the remainder of the construction, this also gives us the auxiliary prime $p$.

Having done so, our algorithm searches through the set of triples $(n, N, \ep)$, $n, N \in \NN$, $\ep$ in a countable subset of $\RR_{> 0}$ with accumulation point $0$.  After each attempt, we increase $n$ and $N$ and decrease $\ep$. To each such triple, our algorithm assigns a set $X(Z)_n$ of points of $X(Z)$ and a boolean. The boolean output will be constructed in segments \ref{ba}--\ref{bd}. If the boolean output is {\it True}, then we output $X(Z)_n$. If the boolean output is {\it False}, then we continue the search. To produce the set $X(Z)_n$, we simply search for points up to a suitable hight-bound depending on $n$ which goes to infinity with $n$. The remainder of the construction concerns the boolean output. 

\segment{ba}{}%%%%%
We partition $X(\Zp)$ into $\ep$-balls, decreasing $\ep$ as needed to ensure that each ball contains at most one element of the set $X(Z)_n$ (our, potentially incomplete, list of integral points). We run our loci algorithm to produce a family $\{F_i\}_i$ of polylogarithmic functions on $X(\Zp)$.

\segment{bb}{}%%%%
We now focus our attention on an $\ep$-ball $B$ containing a rational representative $y \in B$. Using \textit{Lip service} \cite{Lip}, we expand each polylogarithmic function $F_i$ to arithmetic precision $\ep$ and geometric precision $e^{-N}$ about $y$. In a technical step explained in the proof of theorem 7.2.1 of \cite{mtmueii}, we must check that all nonzero coefficients are larger than $\ep$, returning \textit{False} if not.

\segment{bc}{}%%%%%
Let $b$ be the number of points ($0$ or $1$) in $X(Z)_n \cap B$. We run the root-criterion algorithm on the ball $B$, on the precision-levels $N$ and $\ep$, and on each of the functions $F_i$, to verify if $B$ contains no more than $b$ roots.

\segment{bd}{}%%%%%
We repeat steps \ref{bb}-\ref{bc} in each ball. This completes the construction of the algorithm.

%%%%%%%%%%%%%%%%%%%%
%%%%%%%%%%%%%%%%%%%
\bibliography{PolGonII_Refs}%%%%%%
%%%%%%%%%%%%%%%%%%
\bibliographystyle{alphanum}%%%
%%%%%%%%%%%%%%%%%
%%%%%%%%%%%%%%%%%

\vfill

\noindent 
\Small\textsc{
David Corwin \\
Department of mathematics \\
970 Evans Hall \#3840 \\
University of California \\
Berkeley, CA 94720-3840
}
\\
{Email Address:} \texttt{corwind@alum.mit.edu}

\bigskip 

\noindent
\Small\textsc{
Ishai Dan-Cohen \\ 
Department of mathematics \\ 
Ben-Gurion University of the Negev\\ 
Be'er Sheva, Israel}
\\ 
{Email address:} \texttt{ishaidc@gmail.com}

\end{document}